\begin{document}

 \newtheorem{thm}{Theorem}[section]
 \newtheorem{cor}[thm]{Corollary}
 \newtheorem{lem}[thm]{Lemma}{\rm}
 \newtheorem{prop}[thm]{Proposition}

 \newtheorem{defn}[thm]{Definition}{\rm}
 \newtheorem{assumption}[thm]{Assumption}
 \newtheorem{rem}[thm]{Remark}
 \newtheorem{ex}{Example}
\numberwithin{equation}{section}
\def\e{{\rm e}}
\def\x{\mathbf{x}}
\def\by{\mathbf{y}}
\def\bz{\mathbf{z}}
\def\F{\mathcal{F}}
\def\R{\mathbb{R}}
\def\T{\mathbf{T}}
\def\N{\mathbb{N}}
\def\K{\mathbf{K}}
\def\Q{\mathbf{Q}}
\def\M{\mathbf{M}}
\def\O{\mathbf{O}}
\def\C{\mathbb{C}}
\def\P{\mathbf{P}}
\def\Z{\mathbb{Z}}
\def\H{\mathcal{H}}
\def\A{\mathbf{A}}
\def\V{\mathbf{V}}
\def\AA{\overline{\mathbf{A}}}
\def\B{\mathbf{B}}
\def\c{\mathbf{C}}
\def\L{\mathbf{L}}
\def\bS{\mathbf{S}}
\def\H{\mathcal{H}}
\def\I{\mathbf{I}}
\def\Y{\mathbf{Y}}
\def\X{\mathbf{X}}
\def\G{\mathbf{G}}
\def\f{\mathbf{f}}
\def\z{\mathbf{z}}
\def\y{\mathbf{y}}
\def\d{\hat{d}}
\def\bx{\mathbf{x}}
\def\y{\mathbf{y}}
\def\v{\mathbf{v}}
\def\g{\mathbf{g}}
\def\w{\mathbf{w}}
\def\b{\mathbf{b}}
\def\a{\mathbf{a}}
\def\u{\mathbf{u}}
\def\s{\mathcal{S}}
\def\cc{\mathcal{C}}
\def\co{{\rm co}\,}
\def\tg{\tilde{f}}
\def\tx{\tilde{\x}}
\def\supmu{{\rm supp}\,\mu}
\def\supphi{{\rm supp}\,\varphi}
\def\l{{\rm !}}

\title[nonnegativity on closed sets and optimization]{
A new look at nonnegativity on closed sets
and polynomial optimization}
\author{Jean B. Lasserre}
\address{LAAS-CNRS and Institute of Mathematics\\
University of Toulouse\\
LAAS, 7 avenue du Colonel Roche\\
31077 Toulouse C\'edex 4,France}
\email{lasserre@laas.fr}
\date{}

\begin{abstract}
We first show that
a continuous function $f$ 
is nonnegative on a closed
set $\K\subseteq\R^n$ if and only if
(countably many) moment matrices of some signed
measure $d\nu =fd\mu$ with $\supmu=\K$, are all positive semidefinite (if $\K$ is compact $\mu$ is an arbitrary finite Borel measure with $\supmu=\K$). 
In particular, we obtain a convergent explicit hierarchy
of semidefinite (outer) approximations with {\it no} lifting, of the cone of nonnegative polynomials of degree at most $d$.
Wen used in polynomial optimization on certain simple closed sets $\K$ (like e.g., the whole space $\R^n$, the positive orthant, a box, a simplex, or the vertices of the hypercube),
it provides a nonincreasing sequence of upper bounds
which converges to the global minimum by solving a hierarchy of semidefinite programs with only one variable
(in fact, a generalized eigenvalue problem). In the compact case, this convergent
sequence of upper bounds complements the convergent sequence of lower bounds obtained by solving a 
hierarchy of semidefinite relaxations as in e.g. \cite{lasserresiopt}.
\end{abstract}

\keywords{closed sets; nonnegative functions; 
nonnegative polynomials;
semidefinite approximations; moments}

\subjclass{90C25 28C15}

\maketitle

\section{Introduction}

This paper is concerned with a concrete characterization
of continuous functions that are nonnegative on a closed
set $\K\subseteq\R^n$ and its application for optimization purposes.  By concrete we mean a systematic
procedure, e.g. a numerical test that can be implemented
by an algorithm, at least in some interesting cases. For polynomials,
Stengle's Nichtnegativstellensatz \cite{stengle}
provides a certificate
of nonnegativity (or absence of nonnegativity) 
on a semi-algebraic set. Moreover, in principle, this certificate 
can be obtained by solving a single semidefinite program
(although the size of this semidefinite program 
is far beyond the capabilities of today's computers).
Similarly, for compact basic semi-algebraic sets, Schm\"udgen's and Putinar's Positivstellens\"atze 
\cite{schmudgen,putinar} provide certificates 
of strict positivity that can be obtained by solving finitely many semidefinite programs (of increasing size).
Extensions of those certificates to some algebras of non-polynomial functions have been recently proposed in 
Lasserre and Putinar \cite{lasput} and in Marshall and Netzer \cite{netzer}. However, and to the best of our knowledge, there is still no hierarchy of explicit (outer or inner) semidefinite approximations 
(with or without lifting) of the cone of polynomials nonnegative on a closed set $\K$, except if $\K$ is compact and basic semi-algebraic (in which case outer approximations exist). Another exception 
is the convex cone of quadratic forms nonnegative on $\K=\R^n_+$
for which inner and outer approximations are available; see e.g. Anstreicher and Burer \cite{copo1}, and D\"ur \cite{copo2}.
\\

\noindent
{\bf Contribution:} In this paper, we present 
a different approach based on a new (at least
to the best of our knowledge) and simple characterization of continuous functions that are nonnegative on a  closed set 
$\K\subseteq\R^n$. This characterization involves
a {\it single} (but known) measure $\mu$ with $\supmu=\K$, and sums of squares of polynomials. 
Namely, our contribution is twofold:

(a) We first show that a continuous function $f$ is nonnegative on a closed set $\K\subseteq\R^n$
if and only if $\int h^2fd\mu$ is nonnegative for all
polynomials $h\in\R[\x]$, where $\mu$
is a finite Borel measure\footnote{A finite Borel measure $\mu$ on $\R^n$ is a nonnegative set function defined on the Borel 
$\sigma$-algebra of $\R^n$ (i.e., the $\sigma$-algebra generated by the open sets), such that $\mu(\emptyset)=0$, 
$\mu(\R^n)<\infty$, and $\mu(\bigcup_{i=1}^\infty E_i)=\sum_{i=1}^\infty \mu(E_i)$ for any collection of disjoint measurable sets $E_i$.
Its support (denoted $\supmu$) is the smallest closed set 
$\K$ such that $\mu(\R^n\setminus\K)=0$; see e.g. Royden \cite{royden}.} with $\supmu=\K$. 
The measure $\mu$ is arbitrary if $\K$ is compact.
If $\K$ is not compact  then one may choose for $\mu$ the finite Borel measure:

- $d\mu =\exp(-\sum_i \vert x_i\vert)d\varphi$ if $f$ is a polynomial, and

- $d\mu =(1+f^2)^{-1}\exp(-\sum_i \vert x_i\vert)d\varphi$, if $f$ is not a polynomial,

\noindent
where $\varphi$ is any finite Borel measure with support exactly $\K$. But many other choices are possible.

Equivalently, $f$ is nonnegative on $\K$ if and only if
every element of the countable family $\mathcal{T}$ of moment matrices 
associated with the signed Borel 
measure $fd\mu$, is positive semidefinite. 
The absence of nonnegativity on $\K$ can be {\it certified} by exhibiting a polynomial $h\in\R[\x]$ such that
$\int h^2fd\mu<0$, or equivalently,
when some moment matrix in the family $\mathcal{T}$
is not positive semidefinite. And so, interestingly, as for nonnegativity or positivity, our certificate
for absence of nonnegativity is also in terms of sums of squares. When $f$ is a polynomial, these moment matrices are easily
obtained from the moments of $\mu$ and this criterion for absence of nonnegativity complements Stengle's Nichtnegativstellensatz \cite{stengle} (which provides a certificate
of nonnegativity on a semi-algebraic set $\K$) or Schm\"udgen and Putinar's Positivstellens\"atze \cite{schmudgen,putinar} (for 
certificates of strict positivity on compact basic semi-algebraic sets).  At last but not least, we obtain a convergent {\it explicit} hierarchy
of semidefinite (outer) approximations with {\it no} lifting, of the cone $\cc_d$ of nonnegative polynomials of degree at most $2d$. That is, we obtain a nested sequence $\cc^0_d\supset\cdots\cc^k_d\supset\cdots\supset\cc_d$ such that
each $\cc^k_d$ is a spectrahedron defined solely in terms of 
the vector of coefficients of the polynomial, with no additional variable (i.e., no projection is needed). Similar explicit hierarchies can be obtained for the cone of polynomials nonnegative on a  closed set $\K$ (neither
necessarily basic semi-algebraic nor compact), provided that all moments of an appropriate measure $\mu$ (with support exactly $\K$) can be obtained. To the best of our knowledge, this is first result of this kind.

(b) As a potential application, we  consider the problem of computing the {\it global} minimum
$f^*$ of  a polynomial $f$ on a closed set $\K$, a notoriously difficult problem.
In nonlinear programming, a sequence of upper bounds on $f^*$ is usually obtained from a sequence of feasible points 
$(\x_k)\subset\K$, e.g., via some (local) minimization algorithm. But it is important to emphasize that for non convex problems, 
providing a sequence of upper bounds $(f(\x_k))$, $k\in\N$, that converges to $f^*$
is in general impossible, unless one computes points on a grid whose mesh size tends to zero.

We consider the case where $\K\subseteq\R^n$ is a closed set
for which one may compute
all moments of a measure $\mu$ with $\supmu=\K$.
Typical examples of such sets are e.g. $\K=\R^n$ or $\K=\R^n_+$ in the non compact case and
a box, a ball, an ellipsoid, a simplex, or the vertices of an hypercube (or hyper rectangle) in the compact case. 
We then provide a hierarchy of semidefinite programs (with only one variable!) whose optimal values form a monotone nonincreasing sequence of {\it upper} bounds
which converges to the global minimum $f^*$. In fact, each semidefinite program is a very specific one as it
reduces to solving a {\it generalized eigenvalue} problem for a pair of real symmetric matrices..
(Therefore, for efficiency one may use specialized software packages instead of a SDP solver.)
However, the convergence to $f^*$ is in general only asymptotic and not finite (except when $\K$ is a discrete set
in which case finite convergence takes place).
This is in contrast with the hierarchy of semidefinite relaxations defined in Lasserre \cite{lasserresiopt,lasserrebook} 
which provide  a nondecreasing sequence of {\it lower} bounds
that also converges to $f^*$, and very often in finitely many steps.
Hence, for compact basic semi-algebraic sets these two convergent hierarchies of upper and lower bounds complement each other and permit to locate 
the global minimum $f^*$ in smaller and smaller intervals.

Notice that convergence of the hierarchy of convex relaxations in \cite{lasserresiopt} is guaranteed only for compact basic semi-algebraic sets, whereas for the new hierarchy of upper bounds, the only requirement on $\K$ is to know all moments of a measure $\mu$ with $\supmu=\K$. On the other hand, in general
computing such moments is possible only for relatively simple (but not necessarily compact nor semi-algebraic) sets.

At last but not least, 
the nonincreasing sequence of upper bounds converges to $f^*$ even if $f^*$ is not attained, 
which when $\K=\R^n$, could provide
an alternative and/or a complement to the hierarchy of convex relaxations 
provided in Schweighofer \cite{markus} (based on gradient tentacles)
and in H\`a and Pham \cite{ha} (based on the truncated tangency variety), which both provide again a monotone sequence of lower bounds.

Finally, we also give a very simple interpretation
of the hierarchy of dual semidefinite programs, which 
provides some information on the location of global minimizers.

\section{Notation, definitions and preliminary results}
A  Borel measure on $\R^n$ is understood as a {\it positive} Borel measure, i.e., a 
nonnegative set function $\mu$ on the Borel $\sigma$-algebra $\mathcal{B}$ (i.e., the $\sigma$-algebra generated by
the open sets of $\R^n$) such that $\mu(\emptyset)=0$, and
with the countably additive property
\[\mu\left(\bigcup_{i=1}^\infty E_i\right)\,=\,\sum_{i=1}^\infty \mu(E_i),\]
for any collection of disjoint measurable sets $(E_i)\subset\mathcal{B}$; see e.g. Royden \cite[pp. 253--254]{royden}.

Let $\R[\x]$ be the ring of polynomials in the variables
$\x=(x_1,\ldots,x_n)$, and $\Sigma[\x]\subset\R[\x]$ its subset of polynomials that are sums of squares (s.o.s.).
Denote by $\R[\x]_d\subset\R[\x]$ the vector space of
polynomials of degree at most $d$, which forms a vector space of dimension $s(d)={n+d\choose d}$, with e.g.,
the usual canonical basis $(\x^\alpha)$ of monomials.
Also, denote by $\Sigma[\x]_d\subset\Sigma[\x]$
the convex cone of s.o.s. polynomials of degree at most $2d$. If $f\in\R[\x]_d$, write
$f(\x)=\sum_{\alpha\in\N^n}f_\alpha \x^\alpha$ in
the canonical basis and
denote by $\f=(f_\alpha)\in\R^{s(d)}$ its vector of coefficients. 
Let $\mathcal{S}_n$ denotes the vector space of $p\times p$ real symmetric matrices. For a matrix
$\A\in\mathcal{S}_p$ the notation $\A\succeq0$ (resp. $\A\succ0$) stands for $\A$ is positive semidefinite
(resp. definite).

\subsection*{Moment matrix} With $\y=(y_\alpha)$ being a sequence indexed in the canonical basis
$(\x^\alpha)$ of $\R[\x]$, let $L_\y:\R[\x]\to\R$ be the linear functional
\[f\quad (=\sum_{\alpha}f_{\alpha}\,\x^\alpha)\quad\mapsto\quad
L_\y(f)\,=\,\sum_{\alpha}f_{\alpha}\,y_{\alpha},\]
and let $\M_d(\y)$ be the symmetric matrix with rows and columns indexed in 
the canonical basis $(\x^\alpha)$, and defined by:
\begin{equation}
\label{moment}\M_d(\y)(\alpha,\beta)\,:=\,L_\y(\x^{\alpha+\beta})\,=\,y_{\alpha+\beta},\quad\alpha,\beta\in\N^n_d\end{equation}
with $\N^n_d:=\{\alpha\in\N^n\::\:\vert \alpha\vert \:(=\sum_i\alpha_i)\leq d\}$.

If $\y$ has a representing measure $\mu$, i.e., if
$y_\alpha=\int \x^\alpha d\mu$ for every $\alpha\in\N^n$, then
\[\langle \f,\M_d(\y)\f\rangle\,=\,\int f(\x)^2\,d\mu(\x)\,\geq\,0,\qquad \forall \,f\in\R[\x]_d,\]
and so $\M_d(\y)\succeq0$.
A measure $\mu$ is said to be moment determinate if there is no other measure with same moments. In particular, and as an easy consequence of the Stone-Weierstrass
theorem, every measure with compact support is determinate\footnote{To see this note 
that (a) two measures $\mu_1,\mu_2$
on a compact set $\K\subset\R^n$ are identical if and only if $\int_\K fd\mu_1=\int_\K fd\mu_2$ for all continuous 
functions $f$ on $\K$, and (b) by Stone-Weierstrass, the polynomials are dense 
in the space of continuous functions
for the sup-norm.}.

Not every sequence $\y$ satisfying $\M_d(\y)\succeq0$, $d\in\N$, has a representing measure. However:
\begin{prop}[Berg \cite{berg}]
\label{prop-berg}
Let $\y=(y_\alpha)$ be such that $\M_d(\y)\succeq0$, for every $d\in\N$. Then:

{\rm (a)} The sequence $\y$ has a representing measure
whose support is contained in the ball $[-a,a]^n$ if there exists $a,c>0$ such that $\vert y_\alpha\vert\leq c\, a^{\vert\alpha\vert}$ for every $\alpha\in\N^n$.

{\rm (b)} The sequence $\y$ has a representing measure $\mu$ on $\R^n$ if
\begin{equation}
\label{carleman}
\sum_{t=1}^\infty L_\y(x_i^{2t})^{-1/2t}\,=\,+\infty,\qquad \forall \,i=1,\ldots,n.
\end{equation}
In addition, in both cases (a) and (b) the measure $\mu$ is moment determinate.
\end{prop}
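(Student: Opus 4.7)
The two parts are closely linked: I would derive (a) from (b) together with a support-localization argument. First, observe that the polynomial growth bound $\vert y_\alpha\vert\le c\,a^{\vert\alpha\vert}$ implies the Carleman condition (\ref{carleman}), since specializing to $\alpha = 2t e_i$ yields
\[L_\y(x_i^{2t})^{-1/2t}\,=\,y_{2te_i}^{-1/2t}\,\ge\,c^{-1/2t}\,a^{-1}\,\longrightarrow\,a^{-1}\,>\,0,\]
so each term of the series is bounded below by a positive constant and the sum diverges. Granting (b), this produces a representing measure $\mu$ on $\R^n$. To localize its support in $[-a,a]^n$ I would use a Markov-type estimate: for any $\epsilon>0$ and every $t\in\N$,
\[(a+\epsilon)^{2t}\,\mu(\{\x:\vert x_i\vert>a+\epsilon\})\,\le\,\int x_i^{2t}\,d\mu\,=\,y_{2te_i}\,\le\,c\,a^{2t},\]
so letting $t\to\infty$ forces $\mu(\{\x:\vert x_i\vert>a+\epsilon\})=0$ for each coordinate $i$ and each $\epsilon>0$, whence $\supmu\subseteq[-a,a]^n$ and (a) follows.

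For (b) the plan is to invoke Haviland's theorem, which reduces the existence of a representing measure on $\R^n$ to showing that $L_\y(p)\ge 0$ for every $p\in\R[\x]$ that is nonnegative on $\R^n$. The hypothesis $\M_d(\y)\succeq0$ for all $d$ gives this for sums of squares, but by Hilbert's result not every nonnegative polynomial is a sum of squares, so an additional analytic input is required. I would pass to the pre-Hilbert space obtained by equipping $\R[\x]$ with the bilinear form $\langle f,g\rangle:=L_\y(fg)$, quotient out the null space, and complete to a Hilbert space $\H$. On $\H$ one studies the multiplication operators $X_i:f\mapsto x_if$; the one-variable Carleman theorem applied along each coordinate shows that (\ref{carleman}) makes every $X_i$ essentially self-adjoint, and Nussbaum's criterion then furnishes commuting self-adjoint extensions. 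Their joint spectral resolution $E$ produces the desired measure $\mu(\cdot)=\langle E(\cdot)1,1\rangle$, whose moments are $(y_\alpha)$.

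Determinacy is handled separately in each case. When $\K$ is compact (case (a)), the footnoted Stone--Weierstrass argument applies verbatim, since any two measures on $[-a,a]^n$ with the same moments agree on polynomials, hence on all continuous functions, hence as Borel measures. In case (b), determinacy is the multivariate Carleman theorem and follows from the uniqueness of the self-adjoint extensions of the $X_i$ and of their joint spectral resolution. The main obstacle, then, is the operator-theoretic step in (b): forcing commutativity of the unbounded self-adjoint extensions of the $X_i$ is not automatic and is precisely where the growth condition (\ref{carleman}) does its nontrivial work. Once that is in hand, everything else---the reduction of (a) to (b), the Markov support bound, and Stone--Weierstrass determinacy---is essentially bookkeeping.
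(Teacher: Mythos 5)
This proposition is stated in the paper as an imported result (Berg \cite{berg}, with condition (\ref{carleman}) due to Nussbaum \cite{nussbaum}); the paper gives no proof of its own, so there is nothing internal to compare against. Your outline reconstructs precisely the standard argument from those references: for (b), the GNS-type completion of $\R[\x]$ under $\langle f,g\rangle=L_\y(fg)$ and Nussbaum's quasi-analytic-vector criterion yielding strongly commuting self-adjoint extensions of the multiplication operators, whose joint spectral measure represents $\y$; for (a), the observation that the exponential bound implies (\ref{carleman}) plus the Chebyshev estimate $(a+\epsilon)^{2t}\mu(\{|x_i|>a+\epsilon\})\le y_{2te_i}\le c\,a^{2t}$ to localize the support. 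Both reductions are correct, and you rightly flag that the only genuinely hard step is forcing strong commutativity of the unbounded extensions, which is exactly what the Carleman condition buys. One small point: in (a), determinacy requires more than uniqueness among measures supported on $[-a,a]^n$ --- you must first note that \emph{any} measure with these moments satisfies the same Chebyshev bound and is therefore automatically supported in $[-a,a]^n$, after which your Stone--Weierstrass argument closes the case.
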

Condition (b) is an extension to the multivariate case of Carleman's condition in the univariate case and is due to Nussbaum \cite{nussbaum}. For more details see e.g. Berg
\cite{berg} and/or Maserick and Berg \cite{maserick}.

\subsection*{Localizing matrix} Similarly, with $\y=(y_{\alpha})$
and $f\in\R[\x]$ written
\[\x\mapsto f(\x)\,=\,\sum_{\gamma\in\N^n}f_{\gamma}\,\x^\gamma,\]
let $\M_d(f\,\y)$ be the symmetric matrix with rows and columns indexed in 
the canonical basis $(\x^\alpha)$, and defined by:
\begin{equation}
\label{localizing}
\M_d(f\,\y)(\alpha,\beta)\,:=\,L_\y\left(f(\x)\,\x^{\alpha+\beta}\right)\,=\,\sum_{\gamma}f_{\gamma}\,
y_{\alpha+\beta+\gamma},\qquad\forall\,
\alpha,\beta\in\N^n_d.\end{equation}
If $\y$ has a representing measure $\mu$,
then $\langle \g,\M_d(f\,\y)\g\rangle=\int g^2fd\mu$,
and so if $\mu$ is supported on the set $\{\x\,:\,f(\x)\geq0\}$,
then $\M_d(f\,\y)\succeq0$ for all $d=0,1,\ldots$ because
\begin{equation}
\label{localizing2}
\langle \g,\M_d(f\,\y)\g\rangle\,=\,\int g(\x)^2f(\x)\,d\mu(\x)\,\geq\,0,\qquad \forall \,g\in\R[\x]_d.\end{equation}

\section{Nonnegativity on closed sets}

Recall that if $\X$ is a separable metric space 
with Borel $\sigma$-field $\mathcal{B}$, the support ${\rm supp}\,\mu$ of a Borel measure $\mu$ on $\X$
is the (unique) smallest closed set $B\in\mathcal{B}$
such that $\mu(X\setminus B)=0$. Given a Borel measure $\mu$ on $\R^n$
and a measurable function $f:\R^n\to\R$, the mapping $B\mapsto \nu(B):=\int_Bfd\mu$, $B\in\mathcal{B}$, 
defines a set function on $\mathcal{B}$. 
If $f$ is nonnegative then $\nu$ is a Borel measure (which is finite if $f$ is $\mu$-integrable); 
see e.g. Royden \cite[p. 276 and p. 408]{royden}. 
If $f$ is not nonnegative then 
setting $B_1:=\{\x\,:\,f(\x)\geq0\}$ and $B_2:=\{\x\,:\,f(\x)<0\}$, 
the set function $\nu$ 
can be written as the difference 
\begin{equation}
\label{decomp}
\nu(B)\,=\,\nu_1(B)-\nu_2(B),\qquad B\in\mathcal{B},\end{equation}
of the two positive Borel measures $\nu_1,\nu_2$ defined by
\begin{equation}
\label{hahn}
\nu_1(B)=\int _{B_1\cap B}fd\mu,\quad
\nu_2(B)=-\int _{B_2\cap B}fd\mu,\quad\forall B\in\mathcal{B}.\end{equation}
Then $\nu$ is a {\it signed} Borel measure provided that either $\nu_1(B_1)$ or $\nu_2(B_2)$ is finite; 
see e.g. Royden \cite[p. 271]{royden}.
We first provide the following auxiliary result which is also 
of self-interest.

\begin{lem}
\label{aux}
Let $\X$ be a separable metric space, 
$\K\subseteq\X$ a closed set, and $\mu$ a 
Borel measure on $\X$ with $\supmu=\K$. 
A continuous function $f:\X\to\R$ is nonnegative on $\K$ if and only if the set function $B\mapsto \nu(B) =\int_{\K\cap B}fd\mu$, $B\in\mathcal{B}$, is a positive measure.
\end{lem}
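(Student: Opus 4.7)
\medskip

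\noindent\textbf{Proof plan.} The plan is to prove the two directions separately, the forward implication being a routine consequence of nonnegativity plus the monotone convergence theorem, and the backward implication being established by contrapositive using continuity of $f$ together with the defining property of $\supmu$.

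For the \emph{forward} direction, assume $f\ge 0$ on $\K$. Since $\mu(\X\setminus\K)=0$, the set function $\nu$ can be rewritten as $\nu(B)=\int_\X f\,\mathbf{1}_{\K\cap B}\,d\mu$. Nonnegativity of the integrand immediately yields $\nu(B)\ge 0$, and $\nu(\emptyset)=0$ is obvious. Countable additivity is a standard application of the monotone convergence theorem: if $(B_i)\subset\mathcal B$ is a disjoint family, then $\mathbf{1}_{\K\cap\bigcup_i B_i}=\sum_i\mathbf{1}_{\K\cap B_i}$ pointwise, so multiplying by $f\ge 0$ and integrating termwise is justified. Thus $\nu$ is a (possibly infinite) positive Borel measure.

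For the \emph{backward} direction I argue by contrapositive. Suppose $f$ is not nonnegative on $\K$, that is, there exists $\x_0\in\K$ with $f(\x_0)<0$. Set $\varepsilon:=-f(\x_0)/2>0$. By continuity of $f$ on $\X$, there is an open neighborhood $U\subset\X$ of $\x_0$ such that $f(\x)\le -\varepsilon<0$ for every $\x\in U$. Now comes the key use of the support hypothesis: since $\x_0\in\K=\supmu$ and $U$ is an open neighborhood of $\x_0$, by the very definition of the support we have $\mu(U)>0$; combined with $\mu(\X\setminus\K)=0$ this gives $\mu(\K\cap U)=\mu(U)>0$. Consequently
\[
\nu(U)\;=\;\int_{\K\cap U} f\,d\mu\;\le\;-\varepsilon\,\mu(\K\cap U)\;<\;0,
\]
so $\nu$ fails to be a positive measure. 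This finishes the contrapositive.

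The only slightly delicate point is the step $\mu(U)>0$; but this is exactly the content of $\x_0\in\supmu$ (every open neighborhood of a support point has positive measure), so there is no real obstacle. The rest of the argument is essentially formal bookkeeping with the definition of $\nu$ and elementary properties of the integral. No compactness, no semi-algebraicity, and no polynomial structure are needed; the result uses only that $\X$ is a separable metric space so that $\supmu$ is well defined and that $f$ is continuous so that one can produce an open ``strictly negative'' neighborhood around any putative point of negativity.
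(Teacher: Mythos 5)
Your proof is correct, but for the nontrivial (``if'') direction you take a genuinely different, and arguably more direct, route than the paper. The paper argues forward: from positivity of $\nu$ it first deduces that $f\geq 0$ $\mu$-almost everywhere on $\K$ (otherwise some $B_0$ with $\mu(B_0)>0$ and $f<0$ on $B_0$ would give $\nu(B_0)<0$), then shows that the exceptional null set $\G$ cannot matter because $\overline{\K\setminus\G}$ is a closed set of full measure, hence equals $\K$ by minimality of the support, and finally extends nonnegativity from the dense set $\K\setminus\G$ to all of $\K$ by continuity. You instead argue by contrapositive: a point $\x_0\in\K$ with $f(\x_0)<0$ yields, by continuity, an open $U\ni\x_0$ on which $f\leq-\varepsilon$, and $\mu(U)>0$ because $\x_0$ lies in the support, whence $\nu(U)<0$. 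Both arguments rest on the same two ingredients (continuity of $f$ and minimality of the closed support), but yours localizes the negativity around a single point rather than globalizing the a.e.\ statement via density; it is essentially the argument the author himself sketches in the remark following Theorem 3.2 as an ``informal alternative proof'' in the compact case, except that nothing in your version requires compactness. The one step worth making explicit is the equivalence between the paper's definition of $\supmu$ (smallest closed set of full measure) and the characterization you invoke (every open neighborhood of a support point has positive $\mu$-measure): if $\mu(U)=0$ for some open $U\ni\x_0$ with $\x_0\in\K=\supmu$, then $\K\setminus U$ would be a closed set of full measure strictly contained in $\K$, contradicting minimality. With that sentence added, your argument is complete and self-contained.
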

\begin{proof}
The {\it only if part} is straightforward.  For the {\it if part},
if $\nu$ is a positive measure then 
$f(\x)\geq0$ for $\mu$-almost all $\x\in\K$.  That is, there is a Borel set $\G\subset\K$
such that $\mu(\G)=0$ and $f(\x)\geq0$ on $\K\setminus \G$. Indeed, otherwise suppose
that there exists a Borel set $B_0$ with $\mu(B_0)>0$ and $f<0$ on $B_0$;
then one would get the contradiction that $\nu$ is not positive because $\nu(B_0)=\int_{B_0}fd\mu<0$.
In fact, $f$ is called the Radon-Nikodym derivative of $\nu$
with respect to $\mu$; see Royden \cite[Theorem 23, p. 276]{royden}.

Next, observe that 
$\overline{\K\setminus\G}\subset\K$ and 
$\mu(\overline{\K\setminus\G})=\mu(\K)$.
Therefore $\overline{\K\setminus\G}=\K$ because
$\supmu\,(=\K)$ is the unique smallest closed set such that
$\mu(\X\setminus\K)=0$.
Hence, let $\x\in\K$ be fixed, arbitrary. 
As $\K=\overline{\K\setminus\G}$, there is a sequence 
$(\x_k)\subset \K\setminus\G$, $k\in\N$, with $\x_k\to\x$ as $k\to\infty$.
But since $f$ is continuous and $f(\x_k)\geq0$ for every $k\in\N$, 
we obtain the desired result $f(\x)\geq0$. 
\end{proof}
Lemma \ref{aux} itself (of which we have not been able to find a trace in the literature) is a characterization 
of nonnegativity on $\K$ for a continuous function $f$ on $\X$. However, one goal of this paper is to provide a more concrete characterization. To do so we first consider the case of a compact set $\K\subset\R^n$.

\subsection{The compact case}
Let $\K$ be a compact subset of $\R^n$. 
For simplicity, and with no loss of generality, we may and will assume that $\K\subseteq [-1,1]^n$. 

\begin{thm}
\label{thmradon}
Let $\K\subseteq [-1,1]^n$ be compact and 
let $\mu$ be an arbitrary, fixed, finite Borel measure on $\K$ with ${\rm supp}\,\mu=\K$, and with vector of moment
$\y=(y_\alpha)$, $\alpha\in\N^n$.  Let $f$ be a continuous function on $\R^n$. Then:

{\rm (a)} $f$ is nonnegative on $\K$ if and only if
\begin{equation}
\label{thmradon-1}
\int_\K g^2\,f\,d\mu\,\geq\,0,\qquad\forall \,g\in\R[\x],
\end{equation}
or, equivalently, if and only if
\begin{equation}
\label{thmradon-2}
\M_d(\z)\,\succeq\,0,\qquad d=0,1,\ldots
\end{equation}
where $\z=(z_\alpha)$, $\alpha\in\N^n$, with $z_\alpha=\int \x^\alpha f(\x)d\mu(\x)$, and with $\M_d(\z)$ as in (\ref{moment}). 

If in addition $f\in\R[\x]$ then 
(\ref{thmradon-2}) reads $\M_d(f\,\y)\succeq0$, $d=0,1,\ldots$,
where $\M_d(f\,\y)$ is the localizing matrix defined in 
(\ref{localizing}).\\

{\rm (b)} If in addition to be continuous, $f$ is also concave on $\K$, then 
$f$ is nonnegative on the convex hull $\co(\K)$ of $\K$ if and only if
(\ref{thmradon-1}) holds.
\end{thm}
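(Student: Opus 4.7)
The plan is to deduce Theorem \ref{thmradon} from Lemma \ref{aux}, by showing that the semidefiniteness of the moment matrices $\M_d(\z)$ forces the signed measure $d\nu=f\,d\mu$ on $\K$ to actually be a positive measure. The \emph{only if} direction of (a) is immediate: if $f\geq 0$ on $\K$ then $g^2 f\geq 0$ on $\K$, and (\ref{thmradon-1}) follows. The equivalence between (\ref{thmradon-1}) and (\ref{thmradon-2}) is just the identity $\langle \g,\M_d(\z)\g\rangle=\int_\K g^2 f\,d\mu$ together with the definition of $\z$; when $f$ is a polynomial, the identification $\M_d(\z)=\M_d(f\y)$ follows from expanding $z_{\alpha+\beta}=\sum_\gamma f_\gamma\,y_{\alpha+\beta+\gamma}$ and comparing with the definition (\ref{localizing}) of the localizing matrix.

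For the \emph{if} direction of (a), I would invoke the Hahn decomposition $\nu=\nu_1-\nu_2$ of (\ref{hahn}); since $f$ is continuous on the compact set $\K$ and $\mu$ is finite, both $\nu_1$ and $\nu_2$ are finite positive Borel measures supported in $\K\subseteq[-1,1]^n$. The moments of $\nu$ are precisely $\z$, which by hypothesis satisfies $\M_d(\z)\succeq 0$ for every $d$, and moreover $|z_\alpha|\leq \|f\|_\infty\,\mu(\K)\cdot 1^{|\alpha|}$. Hence Proposition \ref{prop-berg}(a) produces a positive Borel measure $\psi$ on $[-1,1]^n$ with moment sequence $\z$. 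Then $\nu_1$ and $\psi+\nu_2$ are two positive measures with compact support and identical moments, because $\int\x^\alpha d\nu_1-\int\x^\alpha d\nu_2=z_\alpha=\int\x^\alpha d\psi$; since measures with compact support are moment determinate (as noted after Proposition \ref{prop-berg}), we conclude $\nu_1=\psi+\nu_2$, so $\nu=\psi$ is a positive measure. Lemma \ref{aux} then yields $f\geq 0$ on $\K$.

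Part (b) will follow quickly once (a) is established. The \emph{only if} direction is trivial since $\K\subseteq\co(\K)$, so (a) applies. For the \emph{if} direction, (a) first yields $f\geq 0$ on $\K$, and concavity extends this to $\co(\K)$: by Carath\'eodory every $\x\in\co(\K)$ can be written $\x=\sum_i\lambda_i\x_i$ with $\x_i\in\K$, $\lambda_i\geq 0$, $\sum_i\lambda_i=1$, whence $f(\x)\geq\sum_i\lambda_i f(\x_i)\geq 0$.

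The only substantive step, and the expected main obstacle, is the \emph{if} direction of (a): bridging the semidefiniteness of moment matrices of a genuinely \emph{signed} measure with the positivity of that measure itself. The key trick is to produce the auxiliary purely positive measure $\psi$ from Berg's theorem, and then exploit moment determinacy on compact sets (via Stone--Weierstrass) to identify $\nu$ with $\psi$. Once this is done, Lemma \ref{aux} converts positivity of $\nu$ into nonnegativity of $f$ on $\K$, and the rest of the theorem is bookkeeping.
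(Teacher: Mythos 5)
Your proposal is correct and follows essentially the same route as the paper: decompose $\nu=\nu_1-\nu_2$, use the bound $\vert z_\alpha\vert\leq c\cdot 1^{\vert\alpha\vert}$ together with Proposition \ref{prop-berg}(a) to produce a positive representing measure $\psi$ on $[-1,1]^n$, identify $\nu_1=\psi+\nu_2$ by moment determinacy of compactly supported measures, and conclude via Lemma \ref{aux}. The only cosmetic difference is your constant $\Vert f\Vert_\infty\,\mu(\K)$ in place of the paper's $\int_\K\vert f\vert\,d\mu$, which changes nothing.
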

\begin{proof}
The {\it only if} part is straightforward. Indeed, if $f\geq0$ on $\K$ then $\K\subseteq
\{\x\,:\,f(\x)\geq0\}$ and so for any finite Borel measure $\mu$ on $\K$, $\int_\K g^2fd\mu\geq0$ for every $g\in\R[\x]$.
Next, if $f$ is concave and $f\geq0$ on $\co(\K)$
then $f\geq0$ on $\K$ and so the ``only if" part of (b) also follows.

{\it If part.} The set function
$\nu(B)=\int_Bfd\mu$, $B\in\mathcal{B}$, 
can be written as the difference $\nu=\nu_2-\nu_2$ of the two positive finite Borel measures $\nu_1,\nu_2$ described
in (\ref{decomp})-(\ref{hahn}),
where $B_1:=\{\x\in\K:f(\x)\geq0\}$ and $B_2:=\{\x\in\K:f(\x)<0\}$. 
As $\K$ is compact and $f$ is continuous,
both $\nu_1,\nu_2$ are finite, and so $\nu$ is a finite signed Borel measure;
see Royden \cite[p. 271]{royden}.
In view of Lemma \ref{aux} it suffices to prove that in fact $\nu$
is a finite and positive Borel measure. So let  $\z=(z_\alpha)$, $\alpha\in\N^n$, be the sequence
defined by:
\begin{equation}
\label{defnu}
z_\alpha\,=\,\int_\K \x^\alpha d\nu(\x)\,:=\,\int_\K \x^\alpha f(\x)d\mu(\x),\qquad\forall\alpha\in\N^n.\end{equation}
Every $z_\alpha$, $\alpha\in\N^n$,  is finite because $\K$ is compact and $f$ is continuous.
So the condition 
\[\int_\K g(\x)^2f(\x)\,d\mu(\x)\geq0,\qquad\forall f\in\R[\x]_d,\]
reads $\langle\g,\M_d(\z)\g\rangle\geq0$ for all
$\g\in\R^{s(d)}$, that is, $\M_d(\z)\succeq0$, where $\M_d(\z)$ is the moment matrix defined in (\ref{moment}). And so (\ref{thmradon-1}) implies
$\M_d(\z)\succeq0$ for every $d\in\N$. 
Moreover, as $\K\subseteq [-1,1]^n$, 
\[\vert z_\alpha\vert\,\leq\,c\,:=\,\int_\K\vert f\vert d\mu,\qquad\forall\,\alpha\in\N^n.\]
Hence, by Proposition \ref{prop-berg}, $\z$ is the moment sequence of a finite (positive) Borel measure $\psi$ on $[-1,1]^n$, that is, as ${\rm supp}\,\nu\subseteq\K\subseteq [-1,1]^n$,
\begin{equation}
\label{equality}
\int_{[-1,1]^n} \x^\alpha\,d\nu(\x)\,=\,\int_{[-1,1]^n} \x^\alpha \,d\psi(\x),\qquad\forall\alpha\in\N^n.\end{equation}
But then using (\ref{decomp}) and (\ref{equality}) yields
\[\int_{[-1,1]^n} \x^\alpha\,d\nu_1(\x)\,=\,
\int_{[-1,1]^n} \x^\alpha \,d(\nu_2+\psi)(\x),\qquad\forall\alpha\in\N^n,\]
which in turn implies $\nu_1=\nu_2+\psi$ because
measures on a compact set are determinate.
Next, this implies $\psi=\nu_1-\nu_2\:(=\nu)$ and so
$\nu$ is a positive Borel 
measure on $\K$. Hence by Lemma \ref{aux},
$f(\x)\geq0$ on $\K$.

If in addition $f\in\R[\x]$, the sequence $\z=(z_\alpha)$ is obtained as a linear combination of $(y_\alpha)$. Indeed if $f(\x)=\sum_\beta f_\beta\,\x^\beta$ then
\[z_\alpha\,=\,\sum_{\beta\in\N^n}f_\beta\,y_{\alpha+\beta},
\qquad\forall\,\alpha\in\N^n,\]
and so in (\ref{thmradon-2}), $\M_d(\z)$ is nothing less than
the {\it localizing} matrix $\M_d(f\,\y)$ 
associated with $\y=(y_\alpha)$ and $f\in\R[\x]$,
defined in (\ref{localizing}), and (\ref{thmradon-2}) reads $\M_d(f\,\y)\succeq0$ for all $d=0,1,\ldots$

Finally, if $f$ is concave then $f\geq0$ on $\K$ 
implies $f\geq0$ on $\co(\K)$, and so the {\it only if} part of (b) also follows.
\end{proof}
Therefore, to check whether a polynomial $f\in\R[\x]$ is nonnegative on $\K$, it suffices to check if
every element of the countable family of real symmetric matrices $(\M_d(f\,\y))$, $d\in\N$,
is positive semidefinite.
\begin{rem}
{\rm An informal alternative proof of Theorem \ref{thmradon} which does not use Lemma \ref{aux} is as follows. If $f$ is not nonnegative
on $\K$ there exists  $\a\in\K$ such that $f(\a)<0$, and so as $\K$ is compact, there is a continuous function, e.g,
$\x\mapsto h(\x):=\exp(-c\Vert \x-\a\Vert ^2)$ close to $1$ in some open neighborhood $\B(\a,\delta)$ of $\a$,
and very small in the rest of $\K$. 
By the Stone-Weierstrass's theorem, one may choose $h$ to be a polynomial.
Next, the complement $\B(\a,\delta)^c\,(=\R^n\setminus \B(\a,\delta)$) of $\B(\a,\delta)$ is closed, and so
$\K\cap\B(\a,\delta)^c$ is a closed set contained in $\K$ (hence smaller than $\K$). Therefore
$\mu(\B(\a,\delta))>0$ because otherwise $\mu(\K\cap\B(\a,\delta)^c)=\mu(\K)$ which would imply
that $\K\cap\B(\a,\delta)^c$ is a support of $\mu$ smaller than $\K$, in contradiction with 
$\supmu=\K$. Hence, we would get the contradiction
\[\int h^2\,f\,d\mu\,\approx\,h(\a)^2f(\a)\,\mu(\B(\a,\delta))\,<\,0.\]
However, in the non compact case described in the next section, this argument is not valid.
}\end{rem}

\subsection{The non-compact case}

We now consider the more delicate case
where $\K$ is a closed set of $\R^n$, not necessarily compact. 
To handle arbitrary non compact sets $\K$ and arbitrary continuous functions $f$,
we need a reference measure $\mu$ with $\supmu=\K$ and with nice properties so that
integrals such as $\int g^2fd\mu$, $g\in\R[\x]$, are well-behaved.

So, let $\varphi$ be an arbitrary finite Borel measure on $\R^n$ whose support is exactly $\K$, and let $\mu$ be the finite Borel measure defined by:
\begin{equation}
\label{defmu}
\mu(B)\,:=\,\int_B\exp\left(-\sum_{i=1}^n\vert x_i\vert\right)\,d\varphi(\x),\qquad \forall\,B\in\mathcal{B}(\R^n).\end{equation}
Observe that $\supmu=\K$ and $\mu$ satisfies Carleman's condition (\ref{carleman}). Indeed,
let $\z=(z_\alpha)$, $\alpha\in\N^n$, be the sequence of moments of $\mu$. Then 
for every $i=1,\ldots,n$, and every $k=0,1,\ldots$, using $x_i^{2k}\leq (2k)\l\exp{\vert x_i\vert}$,
\begin{equation}
\label{bigM}
L_\z(x_i^{2k})=\int_\K x_i^{2k}\ d\mu(\x)\,\leq\, (2k)\l\,\int_\K\e^{\vert x_i\vert}\,d\mu(\x)\,\leq\,(2k)\l\,\varphi(\K)\,=:(2k)\l\,M.\end{equation}
Therefore for every $i=1,\ldots,n$, using $(2k)\l < (2k)^{2k}$ for every $k$, yields
\[\sum_{k=1}^\infty L_\z(x_i^{2k})^{-1/2k}\,\geq\,\sum_{k=1}^\infty M^{-1/2k}\,((2k)\l)^{-1/2k}
\geq\,\sum_{k=1}^\infty \frac{M^{-1/2k}}{2k}\,=\,+\infty,\]
i.e., (\ref{carleman}) holds.
Notice also that all the moments of $\mu$ (defined in (\ref{defmu})) are finite, and so every polynomial 
is $\mu$-integrable.

\begin{thm}
\label{noncompact}
Let $\K\subseteq \R^n$ be closed and let 
$\varphi$ be an arbitrary finite Borel measure whose support is exactly $\K$. 
Let $f$ be a continuous function on $\R^n$. 
If $f\in\R[\x]$ (i.e., $f$ is a polynomial) let $\mu$ be as in (\ref{defmu}) whereas if $f$ is not a polynomial
let $\mu$ be defined by
\begin{equation}
\label{noncompact-1}
\mu(B)\,:=\,\int_B \frac{\exp\left(-\sum_{i=1}^n\vert x_i\vert\right)}{1+f(\x)^2}\,d\varphi(\x),\qquad \forall\, B\in\mathcal{B}(\R^n).
\end{equation}
Then (a) and (b) of Theorem \ref{thmradon}  hold.
\end{thm}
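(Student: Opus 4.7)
The approach mimics the proof of Theorem~\ref{thmradon}, with Proposition~\ref{prop-berg}(b) replacing Proposition~\ref{prop-berg}(a) to handle non-compactness. The only-if direction and the concavity statement in part (b) are identical to the compact case. For the if direction, I will again set $\nu(B) = \int_{\K\cap B} f\, d\mu$, decompose $\nu = \nu_1 - \nu_2$ via (\ref{decomp})--(\ref{hahn}), and produce a positive Borel measure $\psi$ whose moments coincide with $\z = (z_\alpha)$, $z_\alpha = \int \x^\alpha f\, d\mu$; Lemma~\ref{aux} then finishes the proof once $\psi = \nu$ is established.

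The input that justifies the specific definitions (\ref{defmu}) and (\ref{noncompact-1}) is the pointwise bound $|x_j|^m \leq m!\,\e^{|x_j|}$ already exploited in (\ref{bigM}), which converts the weight $\e^{-\sum_j|x_j|}$ into factorial control of moments. I will first verify $\nu_1(\K), \nu_2(\K) < \infty$: in the polynomial case, write $|f| \leq \sum_\beta |f_\beta|\,|\x^\beta|$ and apply the bound coordinatewise; in the non-polynomial case, use $|f|/(1+f^2) \leq 1/2$ directly to get $\int|f|\,d\mu \leq \varphi(\K)/2$. The same computation with an extra factor $x_i^{2k}$ gives $L_\z(x_i^{2k}) \leq C(f)\,(2k+d_f)!$ in either case (the PSD assumption already implies $L_\z(x_i^{2k}) \geq 0$), and combined with $(n!)^{1/n} = \Theta(n)$ this yields Carleman's condition (\ref{carleman}) for $\z$. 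Proposition~\ref{prop-berg}(b) then delivers a moment-determinate positive Borel measure $\psi$ on $\R^n$ with moments $\z$. The same factorial estimates, applied to $\nu_1$ and $\nu_2$ themselves, also put them in the determinate regime of Proposition~\ref{prop-berg}(b). Since $\nu_1$ and $\nu_2 + \psi$ are positive Borel measures with equal moments, determinacy of $\nu_1$ forces $\nu_1 = \nu_2 + \psi$, hence $\psi = \nu$, and Lemma~\ref{aux} yields $f \geq 0$ on $\K$.

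The main obstacle I anticipate is the Carleman estimate $L_\z(x_i^{2k}) \leq C(f)\,(2k+d_f)!$ for the signed integrand $f\, d\mu$ on an unbounded set: this is precisely where the specific design of $\mu$ earns its keep. The $\e^{-\sum|x_i|}$ weight makes the factorial bound work whenever $f$ has polynomial growth, and the additional $1/(1+f^2)$ damping is what salvages the argument when $f$ grows arbitrarily fast. Once the if direction is settled in this form, the localizing-matrix reformulation $\M_d(f\,\y)\succeq 0$ in the polynomial case and part (b) follow verbatim from the compact proof.
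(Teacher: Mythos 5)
Your proposal is correct and follows essentially the same route as the paper's own proof in the appendix: factorial bounds from the weight $\e^{-\sum_i|x_i|}$ (plus the $1/(1+f^2)$ damping in the non-polynomial case) give Carleman's condition for $\z$ and for the moments of $\nu_1$, Proposition~\ref{prop-berg}(b) then yields a determinate representing measure $\psi$ for $\z$, and determinacy of $\nu_1$ forces $\nu_1=\nu_2+\psi$, i.e.\ $\nu=\psi\geq0$, after which Lemma~\ref{aux} concludes. You correctly identified the one subtle point, namely that determinacy must be invoked for $\nu_1$ (not merely for $\psi$) to equate the two positive measures $\nu_1$ and $\nu_2+\psi$ sharing the same moments.
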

For a detailed proof see \S \ref{appendix}. 

It is important to emphasize that in Theorem \ref{thmradon} and \ref{noncompact}, the set $\K$ is an {\it arbitrary} closed set of $\R^n$,
 and to the best of our knowledge, the characterization of nonnegativity of $f$ in terms of positive definiteness
 of the moment matrices $\M_d(\z)$ is new. But of course, this characterization becomes even more interesting
 when  one knows how the compute the moment sequence $\z=(z_\alpha)$, $\alpha\in\N^n$, which 
 is possible in a few special cases only. 
 
Important particular cases of nice such sets $\K$ include boxes, hyper rectangles, ellipsoids, 	and simplices
in the compact case, and the positive orthant,
or the whole space $\R^n$ in the non compact case. For instance,
for the whole space $\K=\R^n$ 
one may choose for $\mu$ in (\ref{defmu}) the multivariate Gaussian (or normal) probability measure
\[\mu(B)\,:=\,(2\pi)^{-n/2}\int_B \exp(-\frac{1}{2}\Vert \x\Vert^2)\,d\x,\qquad B\in\mathcal{B}(\R^n),\]
which the $n$-times product of the one-dimensional normal distribution
\[\mu_i(B)\,:=\,\frac{1}{\sqrt{2\pi}}\,\int_B \exp(-x_i^2/2)\,dx_i,\qquad B\in\mathcal{B}(\R),\]
whose  moments are all easily available in closed form. In Theorem \ref{noncompact} this corresponds to the choice
\begin{equation}
\label{normalchoice}
\varphi(B)\,=\,(2\pi)^{-n/2}\int_B \frac{\exp(-\Vert \x\Vert^2/2)}{\exp(-\sum_{i=1}^n \vert x_i\vert)}\,d\x,\qquad B\in \mathcal{B}(\R^n).\end{equation}
When $\K$ is the positive orthant $\R^n_+$ one may choose for $\mu$ the exponential probability measure
\begin{equation}
\label{exponential}
\mu(B)\,:=\,\int_B \exp(-\sum_{i=1}^n x_i)\,d\x,\qquad B\in\mathcal{B}(\R^n_+),\end{equation}
which the $n$-times product of the one-dimensional exponential distribution
\[\mu_i(B)\,:=\,\int_B \exp(-x_i)\,dx_i,\qquad B\in\mathcal{B}(\R_+),\]
whose  moments are also easily available in closed form. In Theorem \ref{noncompact} this corresponds to the choice
\[\varphi(B)\,=\,2^n\int_B \exp(-\sum_{i=1}^n x_i)\,d\x,\qquad B\in \mathcal{B}(\R^n_+).\]

\subsection{The cone of nonnegative polynomials}

The convex cone $\mathcal{C}_d\subset\R[\x]_{2d}$ of nonnegative polynomials of degree at most $2d$ (a nonnegative polynomial has necessarily even degree) is much harder to characterize
than its subcone $\Sigma[\x]_d$ of sums of squares. Indeed, while the latter has a simple semidefinite 
representation with lifting
(i.e. $\Sigma[\x]_d$ is the projection in $\R^{s(2d)}$
of a spectrahedron\footnote{A spectrahedron is the intersection of the cone of positive semidefinite matrices with an 
affine-linear space. Its algebraic representation is called a Linear Matrix Inequality (LMI).} in a higher dimensional space),
so far there is no such simple representation for the former.
In addition, when $d$ is fixed, 
Blekherman \cite{blek} has shown that after proper normalization, the ``gap" between $\mathcal{C}_d$ and $\Sigma[\x]_d$ increases unboundedly with the number of variables. 

We next provide a convergent hierarchy of
(outer) semidefinite approximations $(\mathcal{C}_d^k)$, $k\in\N$, of $\mathcal{C}_d$ where each $\cc^k_d$
has a semidefinite representation with {\it no} lifting (i.e., no projection is needed and $\cc^k_d$ is a spectrahedron). To the best of our knowledge, this is the first result of this kind.

Recall that with every $f\in\R[\x]_d$ is associated its vector of coefficients $\f=(f_\alpha)$, $\alpha\in\N^n_d$, in the canonical basis of monomials, and conversely, with every $\f\in\R^{s(d)}$ is associated a polynomial $f\in\R[\x]_d$ with vector of coefficients
$\f=(f_\alpha)$ in the canonical basis. Recall that for every $k=1,\ldots$,
\[\gamma_p:=\frac{1}{\sqrt{2\pi}}\,\int_{-\infty}^\infty x^p\,\e^{-x^2/2}\,dx\,=\,
\left\{\begin{array}{rl}
0&\mbox{if $p=2k+1$,}\\
\prod_{j=1}^k(2j-1)&\mbox{if $p=2k$,}\end{array}\right.\]
as $\gamma_{2k}=(2k-1)\gamma_{2(k-1)}$ for every $k\geq1$.
\begin{cor}
\label{th-cone}
Let $\mu$ be the probability measure on
$\R^n$ which is the $n$-times product of
the normal distribution on $\R$,
and so with moments $\y=(y_\alpha)$, $\alpha\in\N^n$,
\begin{equation}
\label{normal}
y_\alpha\,=\,\int_{\R^n}\x^\alpha\,d\mu\,=\,\prod_{i=1}^n
\left(\frac{1}{\sqrt{2\pi}}\int_{-\infty}^\infty  x^{\alpha_i}\e^{-x^2/2}dx\right),\qquad\forall\,\alpha\in\R^n.
\end{equation}
For every $k\in\N$, let $\cc_d^k:=\{\f\in\R^{s(2d)}\,:\,\M_k(f\,\y)\succeq0\}$, where $\M_k(f\,\y)$ is the localizing matrix in (\ref{localizing}) associated with $\y$ and $f\in\R[\x]_{2d}$. Each 
$\cc^k_d$ is a closed convex cone and a spectrahedron.

Then $\cc^0_d\supset\cc^1_d\cdots\supset\cc^k _d
\cdots\supset\cc_d$ and $f\in\cc_d$ if and only if 
its vector of coefficients $\f\in\R^{s(2d)}$ satisfies
$\f\in\cc^k _d$, for every $k=0,1,\ldots$.
\end{cor}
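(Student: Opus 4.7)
The plan is to reduce the corollary to Theorem~\ref{noncompact} applied with $\K=\R^n$, after two structural observations about the localizing matrices. First I would check that $\cc^k_d$ is a spectrahedral cone: from the defining formula~(\ref{localizing}), the entry $\M_k(f\,\y)(\alpha,\beta)=\sum_\gamma f_\gamma\,y_{\alpha+\beta+\gamma}$ is linear in $\f\in\R^{s(2d)}$ and vanishes when $\f=0$, so $\f\mapsto\M_k(f\,\y)$ is a linear map into the symmetric matrices of order $s(k)$. Hence $\cc^k_d$ is the preimage of the positive semidefinite cone under a linear map, which is automatically a closed convex cone, and the underlying linear matrix inequality exhibits it as a spectrahedron with no lifting.

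For the nesting, I would observe that $\M_k(f\,\y)$ is the principal submatrix of $\M_{k+1}(f\,\y)$ obtained by keeping only the rows and columns indexed by multi-indices with $|\alpha|\leq k$; since principal submatrices of positive semidefinite matrices are positive semidefinite, $\cc^{k+1}_d\subseteq\cc^k_d$. The inclusion $\cc_d\subseteq\cc^k_d$ for every $k$ is immediate from~(\ref{localizing2}): if $f\geq 0$ on $\R^n$ then the Gaussian $\mu$ is supported inside $\{f\geq 0\}$, so $\langle\g,\M_k(f\,\y)\g\rangle=\int g^2 f\,d\mu\geq 0$ for all $\g\in\R^{s(k)}$.

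The substantive step is the converse: if $\f\in\cc^k_d$ for every $k$, then $f\geq 0$ on $\R^n$. I would deduce this directly from Theorem~\ref{noncompact}(a) applied to the closed set $\K=\R^n$ and the polynomial $f\in\R[\x]_{2d}$. Setting $z_\alpha:=\int\x^\alpha f\,d\mu$, linearity gives $z_\alpha=\sum_\beta f_\beta\,y_{\alpha+\beta}$, hence $\M_k(\z)=\M_k(f\,\y)$, so the standing assumption matches~(\ref{thmradon-2}) exactly. The only point to verify in order to invoke Theorem~\ref{noncompact} is that the Gaussian measure~(\ref{normal}) fits the framework of that theorem; this is precisely the content of the passage around~(\ref{normalchoice}), where the Gaussian is rewritten as $\exp(-\sum_i|x_i|)\,d\varphi$ for the finite Borel measure $\varphi$ with support $\R^n$ defined there.

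The main obstacle I expect is essentially cosmetic rather than mathematical: confirming that the sufficiency direction can be read off Theorem~\ref{noncompact}(a) without reproving anything. Since the Gaussian integrates every polynomial and satisfies Carleman's condition, no additional estimate is needed beyond what is already inside the proof of that theorem, and the stated chain of inclusions together with the characterization follow.
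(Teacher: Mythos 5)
Your proposal is correct and follows essentially the same route as the paper: linearity of the entries of $\M_k(f\,\y)$ in $\f$ gives the spectrahedron/closed convex cone claim, and the characterization is obtained by invoking Theorem~\ref{noncompact} with $\K=\R^n$ after observing that the Gaussian is of the form~(\ref{defmu}) with $\varphi$ as in~(\ref{normalchoice}). The only addition is your explicit principal-submatrix argument for the nesting, which the paper leaves implicit.
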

\begin{proof}
Following its definition (\ref{localizing}),
all entries of the localizing matrix $\M_k(f\,\y)$
are linear in $\f\in\R^{s(2d)}$, and so
$\M_k(f\,\y)\succeq0$ is an LMI. Therefore $\cc^k_d$
is a spectrahedron and a closed convex cone.
Next, let $\K:=\R^n$ and let $\mu$ be as in Corollary \ref{th-cone}
and so of the form (\ref{defmu}) with $\varphi$ as in (\ref{normalchoice}). 
Then $\mu$ satisfies Carleman's condition
(\ref{carleman}). Hence, by
Theorem \ref{noncompact} with $\K=\R^n$, $f$ is nonnegative on $\K$
if and only if (\ref{thmradon-2})  holds, which is equivalent to stating that $\M_k(f\,\y)\succeq0$, $k=0,1,\ldots$, which in turn is equivalent to stating that $\f\in\cc^k_d$, $k=0,1,\ldots$
\end{proof}
So the nested sequence of convex cones 
$\cc^0_d\supset\cc^k_d\cdots\supset \cc_d$ 
defines arbitrary close outer approximations of $\cc_d$.
In fact $\cap_{k=0}^\infty\cc^k_d$ is closed and
$\cc_d=\cap_{k=0}^\infty\cc^k_d$.
It is worth emphasizing that each $\cc^k_d$ is a 
spectrahedron with {\it no} lifting, that is,
$\cc^k_d$ is defined solely in terms of the vector of coefficients $\f$ with no additional variable (i.e.,
no projection is needed).

For instance, the first approximation $\cc^0_d$ is just the set
$\{\f\in\R^{s(2d)}:\,\int fd\mu\geq0\}$, which is a half-space of 
$\R^{s(2d)}$. And with $n=2$, 
\[\cc^1_d\,=\,\left\{\f\in\R^{s(d)}\::\:\left[\begin{array}{ccc}
\displaystyle\int fd\mu & \displaystyle\int x_1fd\mu & 
\displaystyle\int x_2fd\mu\\
\displaystyle\int x_1fd\mu & \displaystyle\int x_1^2fd\mu & 
\displaystyle\int x_1x_2fd\mu\\
\displaystyle\int x_2fd\mu & \displaystyle\int x_1x_2fd\mu & 
\displaystyle\int x_2^2fd\mu\end{array}\right]\,\succeq\,0\:\right\},\]
or, equivalently, $\cc^1_d$ is the convex basic semi-algebraic set:
\begin{eqnarray*}
\left\{\f\in\R^{s(2d)}\right.&:&\int fd\mu\,\geq\,0\\
&&\left(\int x_i^2fd\mu\right)\left(\int fd\mu\right)\,\geq\,\left(\int x_ifd\mu\right)^2,\quad i=1,2\\
&&\left(\int x_1^2fd\mu\right)\left(\int x_2^2fd\mu\right)\,\geq\,\left(\int x_1x_2fd\mu\right)^2\end{eqnarray*}
\[\left(\int fd\mu\right)\left[\left(\int x_1^2fd\mu\right)\left(\int x_2^2fd\mu\right)-\left(\int x_1x_2fd\mu\right)^2\right]-\]
\[\left(\int x_1fd\mu\right)^2\left(\int x_2^2fd\mu\right)
-\left(\int x_2fd\mu\right)^2\left(\int x_1^2fd\mu\right)+\]
\[\left.2\left(\int x_1fd\mu\right)\left(\int x_2fd\mu\right)
\left(\int x_1x_2fd\mu\right)\,\geq\,0\:\right\},\]
where we have just expressed the nonnegativity of
all principal minors of $\M_1(f\,\y)$.

A very similar result holds for the convex cone $\cc_d(\K)$
of polynomials of degree at most $d$, nonnegative on 
a closed set $\K\subset\R^n$.
  
\begin{cor}
\label{generalk}
Let $\K\subset\R^n$ be a closed set and let $\mu$ be defined
in (\ref{defmu}) where $\varphi$ is a an arbitrary finite Borel measure whose support is exactly $\K$.

For every $k\in\N$, let $\cc_d^k(\K):=\{\f\in\R^{s(d)}\,:\,\M_k(f\,\y)\succeq0\}$, where $\M_k(f\,\y)$ is the localizing matrix in (\ref{localizing}) associated with $\y$ and $f\in\R[\x]_{d}$. Each $\cc^k_d(\K)$ is a closed convex cone and a spectrahedron.

Then $\cc^0_d(\K)\supset\cc^1_d(\K)\cdots\supset\cc^k _d(\K)
\cdots\supset\cc_d(\K)$ and $f\in\cc_d(\K)$ if and only if 
its vector of coefficients $\f\in\R^{s(d)}$ satisfies
$\f\in\cc^k _d(\K)$, for every $k=0,1,\ldots$.
\end{cor}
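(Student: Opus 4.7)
The plan is to imitate the proof of Corollary \ref{th-cone} essentially verbatim, replacing $\R^n$ with the general closed set $\K$ and invoking the polynomial case of Theorem \ref{noncompact}. First I would observe that for a polynomial $f\in\R[\x]_d$ the defining condition for $\cc_d^k(\K)$, namely $\M_k(f\,\y)\succeq0$, is a linear matrix inequality in the coefficient vector $\f$: by (\ref{localizing}), each entry $\M_k(f\,\y)(\alpha,\beta)=\sum_\gamma f_\gamma\,y_{\alpha+\beta+\gamma}$ depends linearly on $\f$, with coefficients given by the (finite, fixed) moments $y_\alpha$ of $\mu$. Hence $\cc^k_d(\K)$ is a spectrahedron, and being cut out by a homogeneous LMI in $\f$ it is also a closed convex cone.

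Next I would verify that the measure $\mu$ in (\ref{defmu}) meets the hypotheses of the polynomial case of Theorem \ref{noncompact}: one has $\supmu=\K$ because $\supphi=\K$ and the exponential weight is strictly positive everywhere, all moments of $\mu$ are finite, and Carleman's condition (\ref{carleman}) holds --- the latter two facts being established in the discussion around (\ref{bigM}). Consequently Theorem \ref{noncompact}(a) applies to every $f\in\R[\x]_d$: $f\geq 0$ on $\K$ if and only if $\M_k(\z)\succeq0$ for every $k\in\N$, where $z_\alpha:=\int\x^\alpha f\,d\mu$. But, as noted in the concluding paragraph of the proof of Theorem \ref{thmradon}, when $f$ is a polynomial this sequence $\z$ is a linear combination of $\y$ and $\M_k(\z)$ coincides with the localizing matrix $\M_k(f\,\y)$. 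This yields the claimed equivalence: $f\in\cc_d(\K)$ if and only if $\f\in\cc^k_d(\K)$ for every $k$.

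Finally I would establish the nesting $\cc^k_d(\K)\supset\cc^{k+1}_d(\K)$: this is immediate because $\M_k(f\,\y)$ sits as a principal submatrix of $\M_{k+1}(f\,\y)$, so positive semidefiniteness of the larger matrix forces positive semidefiniteness of the smaller one. I do not anticipate any genuine obstacle here --- every nontrivial ingredient (the identification $\M_k(\z)=\M_k(f\,\y)$ for polynomial $f$, the applicability of Theorem \ref{noncompact}, and the Carleman verification for the specific $\mu$ of (\ref{defmu})) is already in place earlier in the paper, so the proof is essentially bookkeeping. If one insists on a single delicate point, it is merely checking that the exponential factor in (\ref{defmu}) preserves the support of $\varphi$ and keeps $\mu$ finite, which is precisely the feature the construction was designed to deliver.
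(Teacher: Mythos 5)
Your proposal is correct and follows exactly the route the paper intends: the paper explicitly omits this proof, stating only that it ``mimicks that of Corollary \ref{th-cone}'', and your argument is precisely that mimicry --- the LMI/spectrahedron observation, the Carleman verification for the measure in (\ref{defmu}), the application of Theorem \ref{noncompact}(a) with $\M_k(\z)=\M_k(f\,\y)$ for polynomial $f$, and the principal-submatrix nesting. Nothing is missing.
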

The proof which mimicks that of Corollary \ref{th-cone} is omitted. Of course, for practical computation, one is
restricted to sets $\K$ where one may
compute {\it effectively} the moments of the measure $\mu$.
An example of such a set $\K$ is the positive orthant, in which case one may choose 
the measure $\mu$ in (\ref{exponential})
for which all moments are explicitly available. For compact sets $\K$
let us mention balls, boxes, ellipsoids, and simplices. But again, any compact set
where one knows how to compute all moments of some measure with support exactly $\K$, is fine.

To the best of our knowledge this is the first characterization of an outer approximation of 
the cone $\cc_d(\K)$ in a relatively general context. Indeed,
for the basic semi-algebraic set
\begin{equation}
\label{basic}
\K\,=\,\{\x\in\R^n\::\: g_j(\x)\,\geq\,0,\quad j=1,\ldots,m\:\},
\end{equation}
Stengle's Nichtnegativstellensatz \cite{stengle} states that $f\in\R[\x]$ is nonnegative on $\K$ if and only if
\begin{equation}
\label{stengle1}
p\,f\,=\,f^{2s}+q,\end{equation}
for some integer $s$ and polynomials $p,q\in P(g)$, where $P(g)$ is the {\it preordering}\footnote{The preordering $P(g)$
associated with the $g_j$'s is the set of polynomials of the form $\sum_{\alpha\in\{0,1\}^m}\sigma_\alpha\, g_1^{\alpha_1}\cdots g_m^{\alpha_m}$
where $\sigma_\alpha\in\Sigma[\x]$ for each $\alpha$.} associated with the $g_j$'s.
In addition, there exist bounds on the integer $s$ and the degree of the s.o.s. weights in the 
definition of $p,q\in P(g)$, so that in principle, when $f$ is {\it known}, checking whether $f\geq0$ on $\K$ reduces to solving a single SDP
to compute $h,p$ in the nonnegativity certificate (\ref{stengle1}). However, the size of this SDP is potentially huge
and makes it unpractical. Moreover, the representation of $\cc_d(\K)$ in (\ref{stengle1}) is not 
convex in the vector of coefficients of $f$ because it involves
$f^{2s}$ as well as the product $p f$.

\begin{rem}
{\rm If in Corollary \ref{generalk} one replaces the finite-dimensional
convex cone $\cc_d(\K)\subset\R[\x]_d$ with the infinite-dimensional convex cone $\cc(\K)\subset\R[\x]$ of {\it all}
polynomials nonnegative on $\K$, and $\cc^k_d(\K)\subset\R[\x]_d$ with
$\cc^k(\K)=\{\f\in\R^{s(2k)}:\,\M_k(f\,\y)\succeq0\}$, then the nested sequence of (increasing but finite-dimensional)
convex cones $\cc^k(\K)$, $k\in\N$,
provides finite-dimensional approximations of $\cc(\K)$.
}\end{rem}

\section{Application to polynomial optimization}

Consider the polynomial optimization problem
\begin{equation}
\label{poly-opt}
\P:\quad f^*\,=\,\displaystyle\inf_\x\{\: f(\x)\::\: \x\in\K\:\},
\end{equation}
where $\K\subseteq\R^n$ is closed and 
$f\in\R[\x]$. 

If $\K$ is compact let $\mu$ be a finite Borel measure with $\supmu=\K$
and if $\K$ is not compact,
let $\varphi$ be an arbitrary finite Borel measure with $\supphi=\K$
and let $\mu$ be as in (\ref{defmu}).
In both cases, the sequence of moments $\y=(y_\alpha)$, $\alpha\in\N^n$, is well-defined,
and we assume that $y_\alpha$ is available or can be computed, for every $\alpha\in\N^n$.

Consider the sequence of semidefinite programs:
\begin{equation}
\label{primal}
\lambda_d=\sup_{\lambda\in\R}\,\{\lambda\::\: \M_d(f_\lambda\,\y)\succeq0\:\}
\end{equation}
where $f_\lambda\in\R[\x]$ is the polynomial $\x\mapsto f(\x)-\lambda$. Notice that (\ref{primal}) has only one variable!
\begin{thm}
\label{thm-optim}
Consider the hierarchy of semidefinite programs (\ref{primal})
indexed by $d\in\N$. Then:

{\rm (i)} (\ref{primal}) has an optimal solution 
$\lambda_d\geq f^*$ for every $d\in\N^n$.

{\rm (ii)} The sequence $(\lambda_d)$, $d\in\N$, is monotone nonincreasing and $\lambda_d\downarrow f^*$ as $d\to\infty$.
\end{thm}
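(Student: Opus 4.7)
The plan is to reduce everything to the nonnegativity characterizations in Theorems \ref{thmradon} and \ref{noncompact}, exploiting the fact that since $f_\lambda=f-\lambda$ is a polynomial, the localizing matrix depends affinely on $\lambda$:
\begin{equation*}
\M_d(f_\lambda\,\y)\;=\;\M_d(f\,\y)\,-\,\lambda\,\M_d(\y).
\end{equation*}
Thus (\ref{primal}) is really a generalized eigenvalue problem for the pair $(\M_d(f\,\y),\M_d(\y))$, and its feasible set $\{\lambda\in\R:\M_d(f_\lambda\,\y)\succeq0\}$ is a closed subset of $\R$.

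For (i), I would first establish feasibility of $\lambda=f^*$. Since $f-f^*\geq0$ on $\K$ by definition of $f^*$, Theorem \ref{thmradon}(a) in the compact case, and Theorem \ref{noncompact} in the non-compact case (observing that for a polynomial $f_\lambda$ the reference measure $\mu$ in (\ref{defmu}) does not depend on $\lambda$), together yield $\M_d(f_{f^*}\,\y)\succeq0$ for every $d$. Hence $\lambda=f^*$ is feasible and $\lambda_d\geq f^*$. Finiteness of $\lambda_d$ follows from the $(0,0)$ entry: $\M_d(f_\lambda\,\y)\succeq0$ forces $L_\y(f)-\lambda\,y_0\geq0$, i.e., $\lambda\leq L_\y(f)/y_0$ with $y_0=\mu(\K)>0$. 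Since the feasible set is nonempty, closed in $\R$, and bounded above, the supremum is attained.

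For (ii), monotonicity is immediate: $\M_d(f_\lambda\,\y)$ is a principal submatrix of $\M_{d+1}(f_\lambda\,\y)$, so any $\lambda$ feasible at level $d+1$ is also feasible at level $d$, whence $\lambda_{d+1}\leq\lambda_d$. Combined with the lower bound $\lambda_d\geq f^*$, the limit $\lambda^\star:=\lim_d\lambda_d$ exists and satisfies $\lambda^\star\geq f^*$. To show $\lambda^\star=f^*$ I would argue by contradiction: if $\lambda^\star>f^*$, pick any $\lambda$ strictly between them. Then $\lambda\leq\lambda_d$ for all $d$, so $\lambda$ is feasible at every level, i.e., $\M_d(f_\lambda\,\y)\succeq0$ for every $d\in\N$. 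By Theorem \ref{thmradon}(a) (compact case) or Theorem \ref{noncompact} (non-compact case) applied to the polynomial $f_\lambda$, this forces $f-\lambda\geq0$ on $\K$, contradicting $\lambda>f^*=\inf_\K f$. Therefore $\lambda^\star=f^*$, proving $\lambda_d\downarrow f^*$.

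The one genuinely non-trivial ingredient is the very last step in the non-compact case, where one needs the full strength of Theorem \ref{noncompact}: the measure $\mu$ of (\ref{defmu}) must be moment-determinate via Carleman's condition (\ref{carleman}), so that $\M_d(f_\lambda\,\y)\succeq0$ for \emph{all} $d$ actually certifies nonnegativity of $f_\lambda$ on $\K$. Everything else (feasibility of $\lambda=f^*$, finiteness and attainment of $\lambda_d$, and monotonicity in $d$) reduces to the affine dependence of $\M_d(f_\lambda\,\y)$ on $\lambda$, closedness of the PSD cone, and the principal-submatrix inequality, all of which are routine.
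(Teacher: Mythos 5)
Your proof is correct and follows essentially the same route as the paper: feasibility of $\lambda=f^*$ via Theorems \ref{thmradon}/\ref{noncompact}, boundedness of $\lambda$ from the $(1,1)$ entry of $\M_d(f_\lambda\,\y)$, monotonicity from the principal-submatrix inclusion, and a contradiction via the converse direction of those theorems. The only (harmless) variation is in the last step of (ii): you certify feasibility at every level of a $\lambda$ strictly between $f^*$ and $\lambda^\star$ using the downward-closedness in $\lambda$ of the feasible set (which tacitly uses $\M_d(\y)\succeq0$), whereas the paper passes to the limit point $\lambda^\star$ itself using closedness of the PSD cone.
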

\begin{proof}
(i) Since $f-f^*\geq0$ on $\K$,
by Theorem \ref{thmradon}, 
$\lambda:=f^*$ is a feasible solution of (\ref{primal}) for every $d$. Hence $\lambda_d\geq f^*$ for every $d\in\N$.
Next, let $d\in\N$ be fixed, and let $\lambda$ be an arbitrary feasible solution of (\ref{primal}).
From the condition $\M_d(f_\lambda\,\y)\succeq0$, the diagonal entry $\M_d(f_\lambda\,\y)(1,1)$ must be nonnegative, i.e.,
$\lambda y_0\leq \sum_\alpha f_\alpha\,y_\alpha$, and so, as we maximize and $y_0>0$, (\ref{primal}) must have an optimal solution 
$\lambda_d$.

(ii) Obviously $\lambda_d\leq\lambda_m$ whenever 
$d\geq m$, because $\M_d(f_\lambda\,\y)\succeq0$
implies $\M_m(f_\lambda\,\y)\succeq0$. Therefore, the sequence $(\lambda_d)$, $d\in\N$, is monotone
nonincreasing and being bounded below by $f^*$, converges to $\lambda^*\geq f^*$. Next, suppose that $\lambda^*>f^*$; fix $k\in\N$, arbitrary. The convergence 
$\lambda_d\downarrow \lambda^*$ implies $\M_k(f_{\lambda^*}\,\y)\succeq0$. As $k$ was arbitrary, we obtain that
$\M_d(f_{\lambda^*}\,\y)\succeq0$ for every $d\in\N$. 
But then by Theorem \ref{thmradon} or Theorem \ref{noncompact}, $f-\lambda^*\geq0$ on
$\K$, and so $\lambda^*\leq f^*$, in contradiction with $\lambda^*>f^*$. Therefore $\lambda^*=f^*$.
\end{proof}
For each $d\in\N$, the semidefinite program (\ref{primal}) provides
an upper bound on the optimal value $f^*$ only. 
We next show that the dual contains some
information on global minimizers, at least when
$d$ is sufficiently large.

\subsection{Duality}
\label{sec-duality}
Let $\mathcal{S}_d$ be the space of real symmetric $s(d)\times s(d)$ matrices.
One may write the semidefinite program (\ref{primal}) as
\begin{equation}
\label{eigenvalue}
\lambda_d=\sup_\lambda\{\lambda\,:\,
\lambda\M_d(\y)\preceq \,\M_d(f\,\y)\},\end{equation}
which in fact is a generalized eigenvalue problem for the pair of matrices
$\M_d(\y)$ and $\M_d(f\,\y)$. Its dual is the semidefinite program
\[\inf_{\X\in\mathcal{S}_d}\{\:\langle \X,\M_d(f\,\y)\rangle\,:\,\langle \X,\M_d(\y)\rangle\,=\,1;\:\X\succeq0\:\},\]
or, equivalently,
\begin{equation}
\label{dual}
\lambda^*_d=\inf_\sigma\:\left\{\int_\K f\,\sigma \,d\mu\::\:\int_\K \sigma\,d\mu=1;\:\sigma\in\Sigma[\x]_d\:\right\}.\end{equation}
So the dual problem (\ref{dual}) is to find a sum of squares 
polynomial $\sigma$ of degree at most $2d$ (normalized 
to satisfy $\int \sigma d\mu=1$) that minimizes the integral $\int f\sigma d\mu$, and a simple interpretation of (\ref{dual}) is as follows:

With $M(\K)$ being the space of Borel probability measures on 
$\K$, we know that $f^*=\displaystyle\inf_{\varphi\in M(\K)}\,\displaystyle\int_\K  fd\varphi$. Next, let $M_d(\mu)\subset M(\K)$ be the space of probability measures on $\K$ which have a density 
$\sigma\in\Sigma[\x]_d$ with respect to $\mu$. 
Then (\ref{dual}) reads 
$\displaystyle\inf_{\varphi\in M_d(\mu)}\,\displaystyle\int_\K  fd\varphi$, which clearly shows why one obtains an upper bound on 
$f^*$. Indeed, instead of searching in $M(\K)$
one searches in its subset $M_d(\mu)$.
What is not obvious at all is whether the obtained
upper bound obtained in (\ref{dual}) converges to $f^*$ when the degree
of $\sigma\in\Sigma[\x]_d$ is allowed to increase!
\begin{thm}
\label{thdual}
Suppose that $f^*>-\infty$ and 
$\K$ has nonempty interior. Then :

{\rm (a)} There is no duality gap between (\ref{primal}) and (\ref{dual})
and (\ref{dual}) has an optimal solution $\sigma^*\in\Sigma[\x]_d$ which satisfies
$\displaystyle\int_\K (f(\x)-\lambda_d)\,\sigma^*(\x)d\mu(\x)=0$.

{\rm (b)} If $\K$ is convex and $f$ is convex, let
$\x^*_d:=\int \x\,\sigma^*(\x)d\mu(\x)$. Then $\x^*_d\in\K$
and $f^*\leq f(\x^*_d)\leq\lambda_d$, so that $f(\x^*_d)\to f^*$ as $d\to\infty$. Moreover, if the set
$\{\x\in\K\,:\,f(\x)\leq f_0\}$ is compact for some $f_0>f^*$,
then any accumulation point $\x^*\in\K$ of the sequence $(\x^*_d)$, $d\in\N$,
is a minimizer of problem (\ref{poly-opt}), that is, $f(\x^*)=f^*$.
\end{thm}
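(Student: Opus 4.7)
The approach is to apply conic (SDP) strong duality to the primal--dual pair (\ref{primal})--(\ref{dual}). We already know from Theorem~\ref{thm-optim}(i) that the primal is feasible ($\lambda=f^*$ works) and that its optimal value is finite ($\lambda_d\leq y_0^{-1}\sum_\alpha f_\alpha y_\alpha<\infty$ from the $(1,1)$-entry argument). To invoke Slater's condition on the primal I would first establish $\M_d(\y)\succ 0$: for any nonzero $\g\in\R^{s(d)}$, the polynomial $g$ with coefficient vector $\g$ satisfies $\langle\g,\M_d(\y)\g\rangle=\int g^2\,d\mu$, and if this vanishes then $g\equiv 0$ on $\supmu=\K$, which together with the nonempty-interior hypothesis forces $g\equiv 0$, a contradiction. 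With $\M_d(\y)\succ 0$ in hand, choosing $\lambda$ sufficiently negative yields $\M_d(f\,\y)-\lambda\M_d(\y)\succ 0$, a Slater point for (\ref{eigenvalue}). Standard SDP strong duality then delivers no duality gap and attainment of the dual optimum by some $\X^*\succeq 0$; writing $\sigma^*(\x):=\v_d(\x)^T\X^*\v_d(\x)$ with $\v_d(\x)$ the vector of monomials of degree at most $d$ produces $\sigma^*\in\Sigma[\x]_d$ feasible for (\ref{dual}) with value $\lambda_d$. The complementary slackness identity $\langle \X^*,\M_d(f\,\y)-\lambda_d\M_d(\y)\rangle=0$ translates directly into $\int_\K(f-\lambda_d)\sigma^*\,d\mu=0$.

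\textbf{Plan for (b).} Since $\sigma^*\geq 0$ and $\int\sigma^*d\mu=1$, the set function $d\psi:=\sigma^*\,d\mu$ is a Borel probability measure with support contained in $\supmu=\K$; all its moments are finite (trivially when $\K$ is compact; otherwise because $\sigma^*$ is a polynomial and $\mu$ in (\ref{defmu}) has all polynomial moments finite). To verify $\x^*_d=\int\x\,d\psi\in\K$ I would use a separation argument: if $\x^*_d\notin\K$, then $\K$ being closed and convex produces a linear functional $\ell$ and a constant $c$ with $\ell\leq c$ on $\K$ and $\ell(\x^*_d)>c$; but $\ell(\x^*_d)=\int\ell\,d\psi\leq c$, a contradiction. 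The estimate $f(\x^*_d)\leq\lambda_d$ then follows by Jensen's inequality applied to the convex $f$:
\[
f(\x^*_d)\;=\;f\!\left(\int\x\,d\psi\right)\;\leq\;\int f\,d\psi\;=\;\int_\K f\,\sigma^*\,d\mu\;=\;\lambda_d.
\]
Combined with $f^*\leq f(\x^*_d)$ (since $\x^*_d\in\K$) and $\lambda_d\downarrow f^*$ from Theorem~\ref{thm-optim}(ii), this yields $f(\x^*_d)\to f^*$. For the final assertion, if $S:=\{\x\in\K:f(\x)\leq f_0\}$ is compact with $f_0>f^*$, then eventually $\lambda_d<f_0$, so $\x^*_d\in S$ for all large $d$; any accumulation point $\x^*$ lies in $S\subseteq\K$ and satisfies $f(\x^*)=\lim f(\x^*_d)=f^*$ by continuity of $f$, hence is a global minimizer.

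\textbf{Main obstacle.} The substantive step is the rigorous invocation of SDP strong duality: one must justify both zero gap \emph{and} attainment of the dual optimum by an explicit $\sigma^*\in\Sigma[\x]_d$, and both of these hinge on Slater's condition via $\M_d(\y)\succ 0$, which is precisely where the nonempty-interior hypothesis is indispensable. A secondary subtlety is well-definedness of $\x^*_d$ in the non-compact setting, which rests on $\mu$ having all polynomial moments finite (guaranteed under the Carleman construction (\ref{defmu})). Everything else---the passage from the matrix identity to $\int(f-\lambda_d)\sigma^*d\mu=0$, the separation argument placing $\x^*_d$ in $\K$, and the Jensen bound---is routine once the duality framework is set up.
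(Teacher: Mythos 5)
Your proposal is correct and follows essentially the same route as the paper: Slater's condition for the primal (the paper exhibits the strictly feasible point $\lambda<f^*$ directly via $\int_\K(f-\lambda)g^2\,d\mu>0$, whereas you derive $\M_d(\y)\succ0$ first and take $\lambda$ sufficiently negative --- an equivalent use of the nonempty-interior hypothesis), then strong duality with dual attainment, the identity $\int_\K(f-\lambda_d)\sigma^*\,d\mu=0$, Jensen's inequality for part (b), and the same compactness/subsequence argument for the accumulation points. No gaps.
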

\begin{proof}
(a) Any scalar $\lambda < f^*$ is a feasible solution
of (\ref{primal}) and in addition,
$\M_d((f-\lambda)\,\y)\succ0$ because since
$\K$ has nonempty interior and $f-\lambda>0$ on $\K$,
\[\langle \g,\M_d((f-\lambda)\,\y)\,\g\rangle\,=\,\int_\K 
(f(\x)-\lambda)g(\x)^2\mu(d\x)>0,\qquad\forall g\in\R[\x]_d.\]
But this means that Slater's condition\footnote{For an optimization problem $\inf_\x \{ f_0(\x)\,:\, f_j(\x)\geq0,\,j=1,\ldots,m\}$, Slater's condition states that there exists
$\x_0$ such that $f_j(\x_0)>0$ for every $j=1,\ldots,m$.}
 holds for (\ref{primal})
which in turn implies that there is no duality gap
and (\ref{dual}) has an optimal solution $\sigma^*\in\Sigma[\x]_d$; see e.g. \cite{boyd}. And so,
\[\int_\K (f(\x)-\lambda_d)\,\sigma^*(\x)\,d\mu(\x)=
\int_\K f\,\sigma^*\,d\mu-\lambda_d\,=\,0.\]
(b) Let $\nu$ be the Borel probability measure on $\K$ defined by $\nu(B)=\int_B\sigma^* d\mu$, $B\in\mathcal{B}$.
As $f$ is convex, by Jensen's inequality (see e.g. McShane \cite{jensen}),
\[\int_\K f\sigma^*d\mu\,=\,\int_\K fd\nu\geq\,f\left(\int_\K\x\,d\nu\right)\,=\,f(\x^*_d).\]
In addition, if $\K$ is convex then $\x^*_d\in\K$ and so,
$\lambda_d\geq f(\x^*_d)\geq f^*$. Finally if for some $f_0>f^*$,
the set $\H:=\{\x\in\K: f(\x)\leq f_0\}$ is compact, 
and since $\lambda_d\to f^*$, then $f(\x^*_d)\leq f_0$ for
$d$ sufficiently large, i.e., $\x^*_d\in\H$ for sufficiently large 
$d$. By compactness
there is a subsequence $(d_\ell)$, $\ell\in\N$, and a point $\x^*\in\K$ such that $\x^*_{d_\ell}\to\x^*$ as $\ell\to\infty$. 
Continuity of $f$ combined with
the convergence $f(\x^*_d)\to f^*$
yields $f(\x^*_{d_\ell})\to f(\x^*)= f^*$ as $\ell\to\infty$.
As the convergent subsequence $(\x^*_{d_\ell})$ was arbitrary, the proof is complete.
\end{proof}
So in case where $f$ is a convex polynomial and
$\K$ is a convex set, Theorem \ref{thdual} provides a means of approximating not only the optimal value $f^*$, but also a global minimizer $\x^*\in\K$. 

In the more subtle nonconvex case, one still can obtain some information on global minimizers from an optimal solution $\sigma^*\in\Sigma[\x]_d$ of (\ref{dual}).
Let $\epsilon>0$ be fixed,
and suppose that $d$ is large enough so that 
$f^*\leq\lambda_d\leq f^*+\epsilon$. Then, 
by Theorem \ref{dual}(a),
\[\int_\K (f(\x)-f^*)\sigma^*(\x)\,d\mu(\x)\,=\,\lambda_d-f^*\,<\,\epsilon.\]
As $f-f^*\geq0$ on $\K$, necessarily the measure $d\nu=\sigma^*d\mu$ gives very small weight to regions of
$\K$ where $f(\x)$ is significantly larger than $f^*$.
For instance, if $\epsilon=10^{-2}$ and
$\Delta:=\{\x\in\K: f(\x)\geq f^*+1\}$, then 
$\nu(\Delta)\leq 10^{-2}$, i.e., the set 
$\Delta$ contributes to less than $1\%$ of the total mass of 
$\nu$. So if $\mu$ is uniformly distributed on 
$\K$ (which is a reasonable choice if one has to compute all moments of $\mu$) then a simple inspection of the values of $\sigma^*(\x)$ provides some rough indication on 
where (in $\K$) $f(\x)$ is close to $f^*$.

The interpretation (\ref{dual}) of the dual shows that in general the monotone convergence is only asymptotic and cannot be finite.
Indeed if $\K$ has a nonempty interior then the probability measure $d\nu=\sigma d\mu$ cannot be a Dirac measure at 
any global minimizer $\x^*\in\K$. An exception is the discrete case, i.e., when $\K$
is a finite number of points, like in e.g. 0/1 programs. Indeed we get:
\begin{cor}
\label{exception}
Let $\K\subset \R^n$ be a discrete set $(\x(k))\subset\R^n$, $k\in J$, and let $\mu$ be the probability measure uniformly
distributed in $\K$, i.e.,
\[\mu\,=\,\frac{1}{s}\sum_{k=1}^s \delta_{\x(k)},\]
where $s=\vert J\vert$ and $\delta_\x$ denote the Dirac measure at the point $\x$. Then the optimal value 
$\lambda_d$ of (\ref{primal}) satisfies $\lambda_d=f^*$ for some integer $d$.
\end{cor}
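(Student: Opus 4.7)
The target is to show $\lambda_d \leq f^*$ for all sufficiently large $d$; combined with Theorem~\ref{thm-optim}(i) this gives the claim. The natural way to force $\lambda_d \leq f^*$ is to exhibit, for any candidate $\lambda > f^*$ that is feasible for the primal~(\ref{primal}), a test vector on which the localizing matrix $\M_d(f_\lambda\,\y)$ fails to be positive semidefinite; a Lagrange-type interpolating polynomial concentrated at a minimizer of $f$ on $\K$ is the natural choice.

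Fix $k^* \in \{1,\ldots,s\}$ with $f(\x(k^*)) = f^*$. Since $\x(1),\ldots,\x(s)$ are pairwise distinct, for each $j \neq k^*$ one may pick an affine form $\ell_j \in \R[\x]$ satisfying $\ell_j(\x(j))=0$ and $\ell_j(\x(k^*)) \neq 0$. Then
$$ p(\x) \,:=\, \prod_{j\neq k^*} \frac{\ell_j(\x)}{\ell_j(\x(k^*))} \;\in\; \R[\x] $$
has degree at most $s-1$ and satisfies the Lagrange property $p(\x(j)) = \delta_{j,k^*}$ for $1 \leq j \leq s$. Set $d := s-1$ and let $\g \in \R^{s(d)}$ denote the coefficient vector of $p$ in the canonical basis of monomials. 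For any $\lambda$ that is feasible in~(\ref{primal}), applying the identity~(\ref{localizing2}) with $f$ replaced by $f - \lambda$ and using $\mu = \tfrac{1}{s}\sum_{k=1}^s \delta_{\x(k)}$ yields
$$ 0 \,\leq\, \langle \g,\,\M_d(f_\lambda\,\y)\,\g\rangle \,=\, \int_\K (f-\lambda)\,p^2\,d\mu \,=\, \frac{1}{s}\sum_{k=1}^s \bigl(f(\x(k))-\lambda\bigr)\,p(\x(k))^2 \,=\, \frac{f^*-\lambda}{s}, $$
which forces $\lambda \leq f^*$. Hence $\lambda_d \leq f^*$, and combined with Theorem~\ref{thm-optim}(i) this gives $\lambda_d = f^*$.

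There is no serious obstacle here: the only technical point is producing a single polynomial that realizes a $0/1$ interpolation pattern on the finite support of $\mu$, handled explicitly by the product of affine forms above. The argument incidentally shows that the finite bound $d = s - 1$ always suffices for finite convergence.
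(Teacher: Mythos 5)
Your proof is correct. It rests on exactly the same combinatorial idea as the paper's proof, namely an interpolation polynomial that equals $1$ at a global minimizer $\x(k^*)$ and vanishes at the other points of $\K$; but you package it on the primal side, whereas the paper works through the dual. The paper takes $\sigma^*:=s\,q_{j^*}^2\in\Sigma[\x]$ (with $q_{j^*}$ the interpolation polynomial at the minimizer), checks that $\sigma^*$ is feasible for (\ref{dual}) with objective value $\int f\sigma^*d\mu=f^*$ and $\int\sigma^*d\mu=1$, and then squeezes $f^*=\int f\sigma^*d\mu\geq\lambda^*_d\geq\lambda_d\geq f^*$ via weak duality. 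You instead feed the coefficient vector $\g$ of $p$ directly into the quadratic form of the localizing matrix and invoke (\ref{localizing2}) to get $0\leq\langle\g,\M_d(f_\lambda\,\y)\g\rangle=(f^*-\lambda)/s$ for every primal-feasible $\lambda$, which is weak duality unrolled by hand. What your version buys is self-containment (no appeal to the dual SDP or to the duality discussion of Theorem \ref{thdual}) and an explicit degree bound: by realizing the interpolation polynomial as a product of $s-1$ affine forms you get $\deg p\leq s-1$, hence finite convergence at $d=s-1$, whereas the paper only states ``as soon as $d\geq\deg q_{j^*}$'' without bounding that degree. What the paper's dual formulation buys is the interpretation of the optimal $\sigma^*$ as a probability density concentrated at the minimizer, which connects to the discussion in \S\ref{sec-duality}. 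Both arguments use Theorem \ref{thm-optim}(i) for the reverse inequality $\lambda_d\geq f^*$, which is legitimate here since a finite set is compact.
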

\begin{proof}
Let $\x^*=\x(j^*)$ (for some $j^*\in J$)  be the global minimizer of $\min \{f(\x)\,:\,\x\in \K\}$.
For each $k=1,\ldots,s$ there exists a polynomial $q_k\in\R[\x]$ such that
$q_k(\x(j))=\delta_{k=j}$ for every $j=1,\ldots,s$ (where $\delta_{k=j}$ denotes the Kronecker symbol). The polynomials $(q_k)$ are called {\it interpolation} polynomials.
So let $\sigma^*:=sq_{j^*}^2\in\Sigma[\x]$, so that
\[\int_\K f(\x)\sigma^*(\x)d\mu(\x)\,=\,f(\x(j^*))\,=\,f^*\quad\mbox{and}\quad\int_\K \sigma^*d\mu\,=\,1.\]
Hence as soon as $d\geq {\rm deg}\,q_{j^*}$, $\sigma^*\in\Sigma[\x]_d$  is a feasible solution of (\ref{dual}), and so
from $f^*=\int f\sigma^*d\mu\geq\lambda^*_d \geq\lambda_d\geq f^*$ we deduce that $\lambda^*_d=\lambda_d=f^*$, the desired result.
\end{proof}

There are several interesting cases where the above described methodology can apply, i.e., cases where $\y$ can be obtained either explicitly
in closed form or numerically. In particular, when $\K$ is either:
\begin{itemize}
\item A box $\B:=\prod_{i=1}^n[a_i,b_i]\subset\R^n$, with
$\mu$ being the normalized Lebesgue measure on $\B$.
The sequence $\y=(y_\alpha)$ is trivial to obtain in closed form.
\item The discrete set $\{-1,1\}^n$ with $\mu$ being uniformly
distributed and normalized. Again the sequence $\y=(y_\alpha)$ is trivial to obtain in closed form. Notice that in this case we obtain a new 
hierarchy of semidefinite relaxations (with only one variable) for the celebrated MAXCUT problem (and any nonlinear 0/1 program).
\item The unit Euclidean ball $\B:=\{\x\,:\,\Vert \x\Vert^2\leq 1\}$ with $\mu$ uniformly distributed,
and similarly the unit sphere $\bS:=\{\x\,:\,\Vert \x\Vert^2= 1\}$, with $\mu$ being the rotation invariant probability measure on 
$\bS$. In both cases the moments $\y=(y_\alpha)$ are obtained easily. 
\item A simplex $\Delta\subset\R^n$,
in which case if one takes $\mu$ as the Lebesgue measure
then all moments of $\mu$ can be computed  numerically.
In particular, 
with $d$ fixed, this computation can be done in time polynomial time. See e.g. the recent work of \cite{deloera}. 
\item The whole space $\R^n$ in which case 
$\mu$ may be chosen
to be the product measure $\otimes_{i=1}^n \nu_i$
with each $\nu_i$ being the normal distribution.
Observe that one then obtains a 
new hierarchy of semidefinite approximations (upper bounds) for unconstrained global optimization. The corresponding monotone sequence of
upper bounds converges to $f^*$ no matter if the problem has a global minimizer or not. This may be an alternative and/or a complement to the recent convex relaxations provided in Schweighofer 
\cite{markus} and H\`a and Vui \cite{ha} which also work when $f^*$ is not attained, and provide a convergent sequence of {\it lower} bounds.
\item The positive orthant $\R^n_+$, in which case 
$\mu$ may be chosen to be the product measure $\otimes_{i=1}^n \nu_i$
with each $\nu_i$ being the exponential 
distribution $\nu_i(B)=\int_{\R^+\cap B}\e^{-x}dx$, $B\in\mathcal{B}$. In particular
if $\x\mapsto f(\x) :=\x^T\A\x$ where $\A\in\mathcal{S}_n$, then one obtains
a hierarchy of numerical tests to check whether $\A$ is a {\it copositive} matrix. Indeed,
if $\lambda_d$ is an optimal solution of (\ref{eigenvalue}) then
$\A$ is copositive if and only if $\lambda_d\geq0$ for all $d\in\N$.
Notice that we also obtain a hierarchy of {\it outer approximations} $({\rm COP}_d)\subset\mathcal{S}_n$
of the cone ${\rm COP}$ of $n\times n$ 
copositive matrices. Indeed, for every $\A\in\mathcal{S}_n$, let $f_\A$ be the quadratic form
$\x\mapsto f_\A(\x):=\x^T\A\x$. Then, for every $d$, the set
\[{\rm COP}_d\,:=\,\{\A\in\mathcal{S}_n\::\: \M_d(f_\A\,\y)\,\succeq0\:\}\]
is a convex cone defined only in terms of the coefficients of the matrix $\A$.
It is even a spectrahedron since $\M_d(f_\A\,\y)$ is a linear matrix inequality in the coefficients of $\A$.
And in view of Theorem \ref{thmradon}(a), ${\rm COP}\,=\,\bigcap_{d\in\N}{\rm COP}_d$.
\end{itemize}
\subsection{Examples}
In this section we provide three simple examples to illustrate the above methodology.
\begin{ex}
\label{ex1}
{\rm Consider the global minimization on $\K=\R^2_+$ of
the Motzkin-like polynomial $\x\mapsto f(\x)=x_1^2x_2^2\,(x_1^2+x_2^2-1)$ whose global minimum is $f^*=-1/27\approx-0.037$,
attained at $(x_1^*,x_2^*)=(\pm\sqrt{1/3},\pm\sqrt{1/3})$.
Choose for $\mu$ the probability measure 
$\mu(B):=\int_B\e^{-x_1-x_2}d\x$, $B\in\mathcal{B}(\R^2_+)$, for which the sequence of moments $\y=(y_{ij})$, $i,j\in\N$,
is easy to obtain. Namely $y_{ij}=i{\rm !}j{\rm !}$ for every
$i,j\in\N$.
Then the semidefinite relaxations (\ref{primal}) yield $\lambda_0=92$, 
$\lambda_1=1.5097$, and $\lambda_{14}=-0.0113$, showing a significant and rapid decrease in first iterations with a long tail close to $f^*$, illustrated in Figure \ref{fig1}.
Then after $d=14$, one encounters some numerical problems and we cannot trust the 
results anymore.

If we now minimize the same polynomial $f$ on the box $[0,1]^2$, one choose for $\mu$ the probability uniformly distributed on $[0,1]^2$,
whose moments $\y=(y_{ij})$, $i,j\in\N$, are also easily obtained by $y_{ij}=(i+1)^{-1}(j+1)^{-1}$.
Then one obtains $\lambda_0=0.222$, $\lambda_1=-0.055$, and $\lambda_{10}=-0.0311$, showing again a rapid decrease in first iterations with a long tail close to $f^*$, illustrated in Figure \ref{fig2}.
}
\begin{figure}[h!]
\begin{center}
\includegraphics[width=0.8\textwidth]{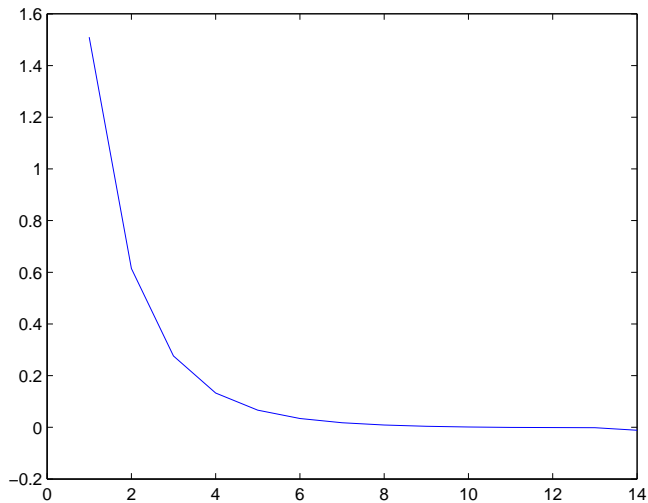}
\caption{Minimizing the Motzkin-like polynomial in $\R^2_+$\label{fig1}}
\end{center}
\end{figure}
\begin{figure}[h!]
\begin{center}
\includegraphics[width=0.8\textwidth]{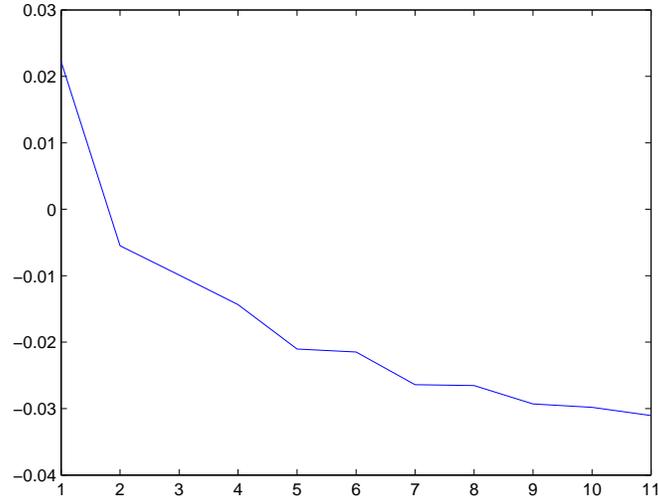}
\caption{Minimizing the Motzkin-like polynomial in $[0,1]^2$\label{fig2}}
\end{center}
\end{figure}

\end{ex}

\begin{ex}
\label{ex2}
{\rm Still on $\K=\R^2_+$, consider the global 
minimization of
the polynomial $\x\mapsto x_1^2+(1-x_1x_2)^2$
whose global minimum  $f^*=0$ is not attained. 
Again, choose for $\mu$ the probability measure 
$\mu(B):=\int_B\e^{-x_1-x_2}d\x$, $B\in\mathcal{B}(\R^2_+)$.
Then the semidefinite relaxations (\ref{primal}) yield 
$\lambda_0=5$, 
$\lambda_1=1.9187$ and 
$\lambda_{15}=0.4795$,
showing again a significant and rapid decrease in first iterations with a long tail close to $f^*$, illustrated in Figure \ref{fig3};
numerical problems occur after $d=15$.
However, this kind of problems where the global minimum $f^*$ is not attained,  is notoriously difficult.
Even the semidefinite relaxations defined in \cite{ha} (which provide lower bounds on $f^*$)
and especially devised for such problems, encounter numerical difficulties; see \cite[Example 4.8]{ha}.
\begin{figure}[h!]
\begin{center}
\includegraphics[width=0.8\textwidth]{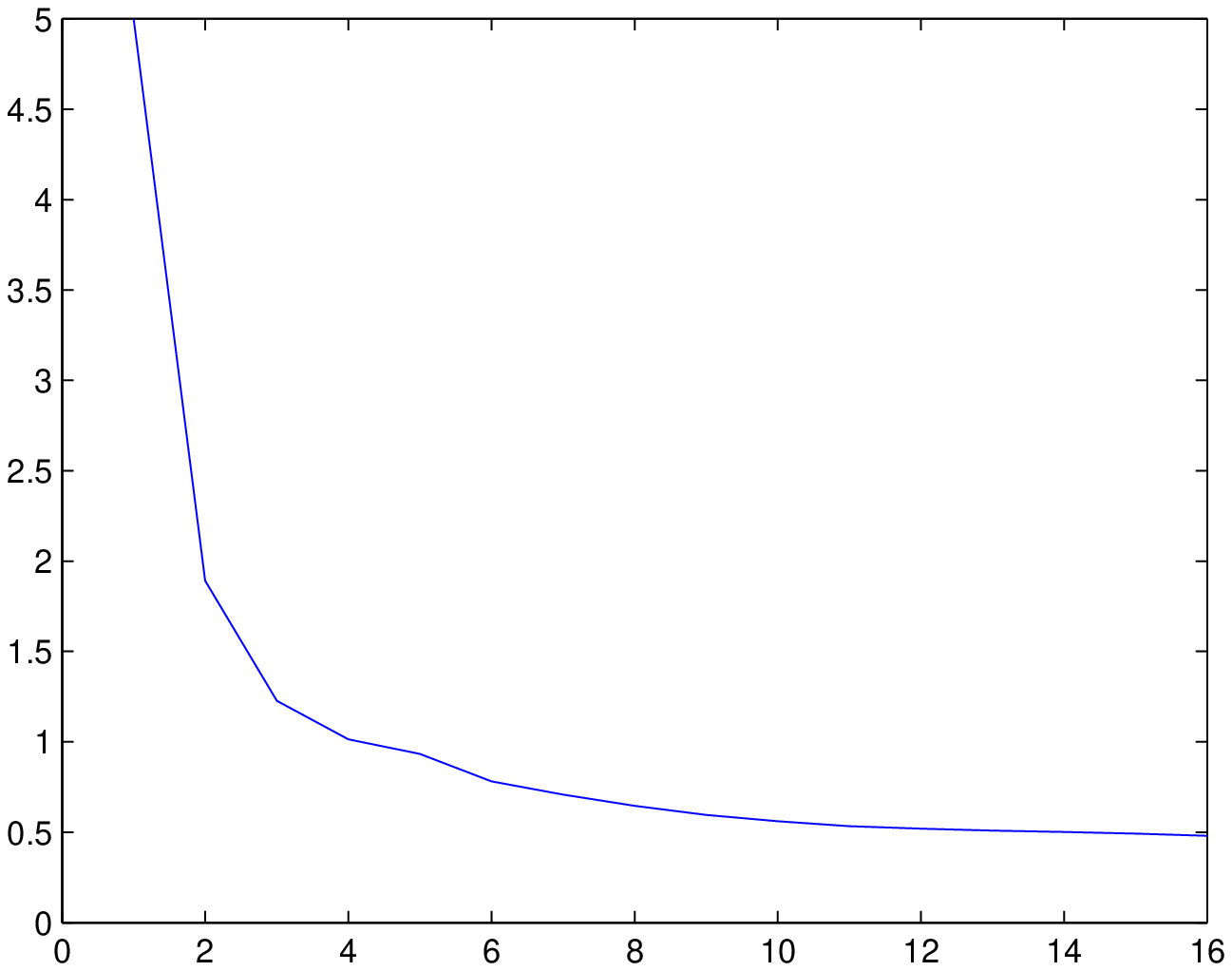}
\caption{Minimizing $x_1^2+(1-x_1x_2)^2$ on $\R^2_+$}
\label{fig3}
\end{center}
\end{figure}
}
\end{ex}
\begin{ex}
\label{ex4}
{\rm The following example illustrates the duality results of Section \S \ref{sec-duality}.
The univariate polynomial $x\mapsto f(x):=0.375-5x+21x^2-32x^3+16x^4$ displayed in Fig \ref{fig4}
has two global minima at $x^*_1=0.1939$ and $x^*_2=0.8062$, with $f^*=-0.0156$.
In Fig \ref{fig5} is plotted the sequence of upper bounds $\lambda_d\to f^*$ as $\d\to\infty$,
with again a rapid decrease in first iterations.
One has plotted in Fig \ref{fig6} the s.o.s. polynomial $x\mapsto \sigma(x)$,
optimal solution of (\ref{dual}) with $d=10$, associated with the  probability density $\sigma(x) dx$ as explained in
\S \ref{sec-duality}. As expected, two peaks appear at the points $\tilde{x}_i\approx x^*_i$, $i=1,2$.
\begin{figure}[h!]
\begin{center}
\includegraphics[width=0.8\textwidth]{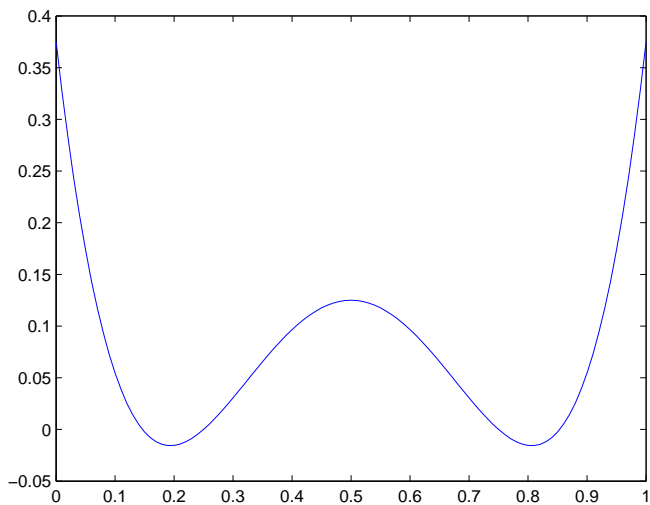}
\caption{$f(x)=0.375-5x+21x^2-32x^3+16x^4$ on $[0,1]$}
\label{fig4}
\end{center}
\end{figure}

\begin{figure}[h!]
\begin{center}
\includegraphics[width=0.8\textwidth]{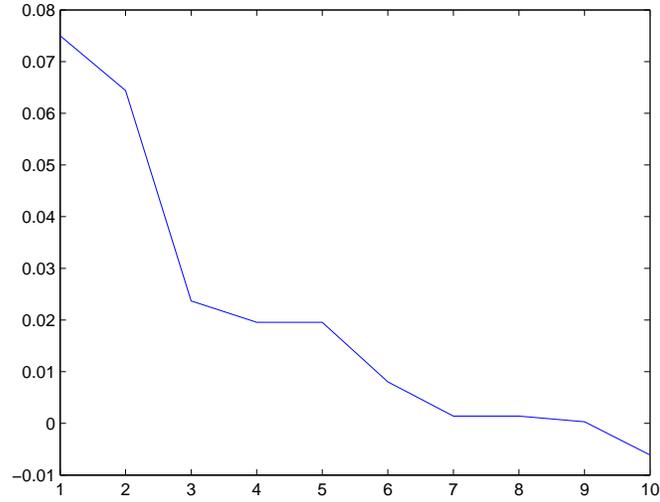}
\caption{Minimizing $0.375-5x+21x^2-32x^3+16x^4$ on $[0,1]$}
\label{fig5}
\end{center}
\end{figure}
\begin{figure}[h!]
\begin{center}
\includegraphics[width=0.8\textwidth]{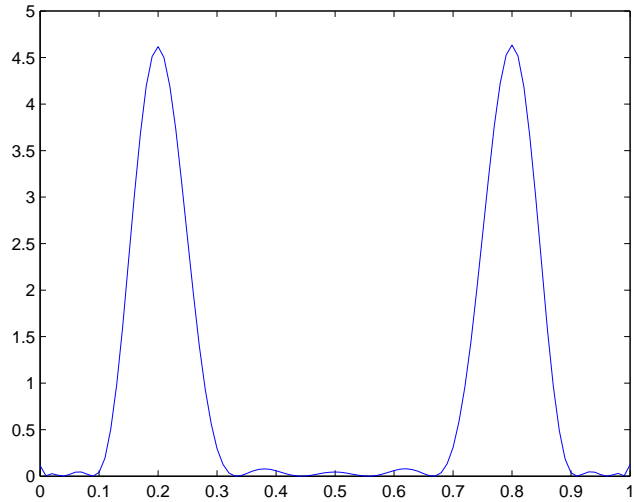}
\caption{The probability density $\sigma(x)dx$ on $[0,1]$}
\label{fig6}
\end{center}
\end{figure}
}\end{ex}
\begin{ex}
\label{maxcut}
{\rm We finally consider a discrete optimization problem, namely the celebrated MAXCUT problem 
\[f^*\,=\,\min_\x \{\x^T\Q\x\::\:\x\in\{-1,1\}^n\},\]
where $\Q=(Q_{ij})\in\R^{n\times n}$ is a real symmetric matrix whose all diagonal elements vanish.
The measure $\mu$ is uniformly distributed on $\{-1,1\}^n$ so that its moments are readily available.
We first consider the equal weights case, i.e., $Q_{ij}=1/2$ for all $(i,j)$ with $i\neq j$ in which case $f^*=-\lfloor n/2\rfloor$.
With $n=11$  the successive values for $\lambda_d$, $d\leq4$, are displayed in Table \ref{table1} and 
$\lambda_4$ is relatively close to $f^*$. Next we have generated five random instances of MAXCUT
with $n=11$ but $Q_{ij}=0$ with probability 1/2, and if $Q_{ij}\neq0$ it is randomly generated 
using the Matlab ``rand" function.
The successive values of $\lambda_d$, $d\leq 4$, are displayed in Table \ref{table2}, and again,
$\lambda_4$ is quite close to $f^*$\footnote{The optimal value $f^*$ has been computed using the GloptiPoly software
\cite{gloptipoly} dedicated to solving the Generalized Problem of Moments}.

\begin{table}
\begin{center}
\begin{tabular}{||l|c|c|c|c|c|c|c||}
\hline
 $d$& $d=0$  & $d=1$  &$d=2$ &$d=3$ &$d=4$& $f^*$\\
\hline
\hline
$\lambda_d$& 0 & -1&-2.662 & -3.22&-4&-5\\
\hline 
\end{tabular}
\end{center}
\caption{MAXCUT: $n=11$; $Q(i,j)=1$ for all $i\neq j$.\label{table1}}
\end{table}
\begin{table}
\begin{center}
\begin{tabular}{||l|c|c|c|c|c|c|c||}
\hline
 $d$& $\lambda_0$  & $\lambda_1$  &$\lambda_2$ &$\lambda_3$ &$\lambda_4$& $f^*$\\
\hline
\hline
Ex1& 0 & -1.928 &-3.748&-5.22 &-6.37 &-7.946\\
\hline
Ex2& 0 & -1.56 &-3.103&-4.314 &-5.282 &-6.863\\
\hline
Ex3& 0 & -1.910 &-3.694&-5.078 &-6.161 &-8.032\\
\hline
Ex4& 0 & -2.164 &-4.1664&-5.7971&-7.06 &-9.198\\
\hline
Ex5& 0 & -1.825 &-3.560&-4.945 &-5.924 &-7.467\\
\hline
\end{tabular}
\end{center}
\caption{MAXCUT: $n=11$; $Q$ random.\label{table2}}
\end{table}

}\end{ex}

The above examples seem to indicate
that even though one chooses
a measure $\mu$ uniformly distributed on $\K$, 
one obtains a rapid decrease in the first iterations and then a slow convergence close to $f^*$. 
If on the one hand the convergence to $f^*$ is likely to be slow, 
on the other hand, one has to solve semidefinite programs 
(\ref{primal}) with only one variable! In fact solving the semidefinite program
(\ref{eigenvalue}) is computing the smallest {\it generalized eigenvalue} associated
with the pair of real symmetric matrices $(\M_d(f\,\y),\M_d(\y))$,
for which specialized codes are available (instead of using a solver for semidefinite programs).
However, one has to remember that the choice is limited to measures $\mu$ with $\supmu=\K$ and whose moments are available or easy to compute. Hence, the present methodology is so far limited to
simple sets $\K$ as described before. Finally, analyzing how the convergence to $f^*$ depends on $\mu$
is beyond the scope of the present paper and is a topic of further research. 

\subsection{Discusssion}

In nonlinear programming, sequences of upper bounds on the global minimum $f^*$
are usually obtained from feasible points $\x\in\K$, e.g., via some (local) minimization algorithm.
But for non convex problems, providing a sequence of upper bounds that converges to the global minimum $f^*$
is in general impossible unless one computes points on a grid whose mesh size tends to zero. In the above methodology one provides a monotone nonincreasing sequence of upper bounds 
converging to $f^*$ for polynomial optimization problems on sets $\K$, non necessarily compact but such that one may 
compute all moments of some finite Borel measure $\mu$ with $\supmu=\K$. In fact,
if there are only finitely many (say up to order $2d$) moments available then one
obtains a finite sequence of upper bounds. 

In contrast to the 
hierarchy of semidefinite relaxations in e.g. \cite{lasserresiopt,lasserrebook} which provide 
lower bounds converging to $f^*$ when $\K$ is a compact
basic semi-algebraic set, the convergence of the upper bounds to $f^*$ is only asymptotic and never finite, except
when $\K$ is a discrete set. However,
and even if we expect the convergence to be rather slow when close to $f^*$,
to our knowledge it is the first approach of this kind, and in a few iterations one may obtain
upper bounds which (even if crude) complements the lower bounds obtained 
in \cite{lasserresiopt} (in the compact case).

Also note that to solve (\ref{eigenvalue}) several improvements 
are possible. For instance,
we have already mentioned that it could be solved via specialized packages for generalized eigenvalue problems.
Next, if instead of using the canonical basis of monomial $(\x^\alpha)$, one now expresses the moment matrix $\M_d(\y)$ with rows and columns indexed in the basis of polynomials $(p_\alpha)\subset\R[\x]$ (up to degree $d$) {\it orthogonal}\footnote{A family of univariate polynomials $(p_k)\subset\R[x]$ is orthogonal with respect to a finite measure $\mu$ on $\R$ if
$\int p_ip_kd\mu=\delta_{i=k}$. For extensions to the multivariate case see e.g. \cite{dunkl,helton}.} with respect to $\mu$,
then $\M_d(\y)$ becomes the identity matrix. And so problem (\ref{eigenvalue}) reduces to a standard 
eigenvalue problem, namely that of computing the smallest eigenvalue of the (real and symmetric) localizing matrix 
$\M_d(f\,\y)$ (expressed in the basis of orthogonal polynomials)! And it turns out that computing the 
orthogonal polynomials is easy once the moment matrix $\M_d(\y)$ is available, since they
can be obtained via computing certain determinants, as explained in e.g. \cite{dunkl,helton}.

\subsection*{Inverse problem from moments} Finally, observe that the above methodology perfectly fits 
{\it inverse problems} from moments, where precisely
some Borel measure $\mu$ is known only from 
its moments (via some measurement device), and one wishes to recover (or approximately recover) its support $\K$ from the known moments; see e.g. the work of Cuyt et al. \cite{cuyt}
and the many references therein.
Hence if $f\in\R[\x]$
is fixed then by definition $f-f^*\geq0$ (on $\K$) provides a strong {\it valid} (polynomial) {\it inequality} for 
the unknown set $\K$. So 
computing an optimal solution $\lambda_d$ of (\ref{primal}) for $d$ sufficiently large, will provide an almost-valid
polynomial inequality 
$f-\lambda_d\geq0$ for $\K$.
One may even let $f\in\R[\x]_d$ be unknown and search
for the ``best" valid inequality $f(\x)-f^*\geq0$ where $f$ varies in some family (e.g. linear or quadratic polynomials) and minimizes ome appropriate (linear or convex) objective function of its vector of coefficents $\f$.

\section{Conclusion}

In this paper we have presented a new characterization of nonnegativity on a closed set $\K$ which is based on 
the knowledge of a single finite Borel measure $\mu$ with $\supmu=\K$. It permits to
obtain a hierarchy of spectrahedra which provides a nested sequence of outer approximations of the convex cone of polynomials of degree at most $d$, nonnegative on $\K$. When used in 
polynomial optimization for certain ``simple sets" $\K$, one obtains a hierarchy of semidefinite approximations 
(with only one variable) which provides a nonincreasing sequence of upper bounds converging to the global optimum, hence a complement to the sequence of upper bounds provided by the hierarchy of semidefinite relaxations defined in e.g. \cite{lasserresiopt,lasserrebook} when $\K$ is compact and basic semi-algebraic. A topic of further 
investigation is to analyze the efficiency of such an approach
on a sample of optimization problems on simple closed sets 
like the whole space $\R^n$, the positive orthant $\R^n_+$, a box, a simplex, or an ellipsoid, as well as for some inverse problems from moments.

\section{appendix}
\label{appendix}
\subsection*{Proof of Theorem \ref{noncompact}}
\begin{proof}
The {\it only if} part is exactly the same as in the proof of Theorem \ref{thmradon}.
For the {\it if} part, let 
$\z=(z_\alpha)$ be 
the sequence defined in (\ref{defnu}).
The sequence $\z$  is well defined because $\x\mapsto\x^\alpha f(\x)$ is $\mu$-integrable for all $\alpha\in\N^n$.
Indeed, if $f$ is a polynomial (so that $\mu$ is defined in (\ref{defmu})) we have seen that
all moments of $\mu$ are finite and since $\x^\alpha f(\x)$ is a polynomial
the  result follows. If $f$ is not a polynomial (so that $\mu$ is defined in (\ref{noncompact-1}))
then 
\[\int_\K\vert \x^\alpha\,f(\x)\vert\,d\mu(\x)\,\leq\,\int_\K \vert \x^\alpha\vert \exp(-\sum_i\vert x_i\vert)\,d\varphi(\x)
\leq\varphi(\R^n)\prod_{i=1}^n\alpha_i\l,\]
where we have used that $\vert x_i^{\alpha_i}\vert\leq \alpha_i\l \exp{\vert x_i\vert}$, and $\vert f\vert/(1+f^2)\leq 1$ for all $\x$.
As in the proof of Theorem \ref{thmradon}, the set function 
$B\mapsto \nu(B):=\int_B fd\mu$, $B\in\mathcal{B}$, 
is a signed Borel measure
because again $\nu$ can be written as the difference $\nu_1-\nu_2$ of the two
positive Borel measures $\nu_1,\nu_2$ in (\ref{decomp})-(\ref{hahn}).
With same majorizations as above, both $\nu_1$ and $\nu_2$ are finite Borel measures
and so $\nu$ is a finite signed Borel measure.

The same arguments as in the proof of Theorem \ref{thmradon} show that
$\M_d(\z)\succeq0$ for every $d\in\N$. Next, 
the sequence $\z$ satisfies the generalized Carleman's condition (\ref{carleman}).

Indeed, first consider the case where $f$ is a polynomial (and so $\mu$ is as in (\ref{defmu})).
Let $1\leq i\leq n$ be fixed arbitrary, and let $2s\geq{\rm deg}f$. Observe that
whenever $\vert \alpha\vert\leq k$, 
$\vert\x\vert^\alpha\leq \vert\x_j\vert^k$ on the subset $W_j:=\{\x\in \R^n\setminus [-1,1]^n\,:\,\vert x_j\vert=\max_i\vert x_i\vert\}$. And so, $\vert f(\x)\vert\leq \Vert f\Vert_1\, \vert\x_j\vert^{2s}$ for all $\x\in W_j$ (and where $\Vert f\Vert_1:=\sum_\alpha \vert f_\alpha\vert$). Hence,
\begin{eqnarray}
\nonumber
L_{\z}(x_i^{2k})&=&\int_\K f(\x)\,x_i^{2k}d\mu(\x)\\
\nonumber
&\leq&\int_{\K\cap [-1,1]^n} \vert f(\x)\vert\,x_i^{2k}d\mu(\x)+
\Vert f\Vert_1\,\sum_{j=1}^n\int_{\K\cap W_J} \,x_j^{2(k+s)}d\mu(\x)\\
\label{aux-carl1}
&\leq& \Vert f\Vert_1\mu(\K)+Mn\Vert f\Vert_1\,(2(k+s))\l\,\leq\,2Mn\Vert f\Vert_1\,(2(k+s))\l,
\end{eqnarray}
where $M$ is as in (\ref{bigM}) and assuming with no loss of generality that $\mu(\K)\leq Mn(2(k+s))\l$ (otherwise rescale $\varphi$).
And so we have
\begin{eqnarray*}
L_{\z}(x_i^{2k})^{-1/2k}&\geq& (2Mn\Vert f\Vert_1)^{-1/2k}\,\left((2(k+s))\l)^{-1/2(k+s)}\right)^{(k+s)/k}\\
&\geq& \frac{1}{2}\left((2(k+s))\l)^{-1/2(k+s)}\right)^{(k+s)/k}\\
&\geq&\frac{1}{2}\left(\frac{1}{2(k+s)}\right)^{(k+s)/k},
\end{eqnarray*}
where $k\geq k_0$ is sufficiently large so that $(2Mn\Vert f\Vert_1)^{-1/2k}\geq 1/2$. 
Therefore,
\[\sum_{k=1}^\infty
L_{\z}(x_i^{2k})^{-1/2k}\geq \frac{1}{2}\sum_{k=k_0}^\infty\left(\frac{1}{2(k+s)}\right)^{(k+s)/k}\,=\,+\infty,\]
where the last equality follows from $\sum_{k=1}^\infty (2k)^{-1}=+\infty$. Indeed,
$(\frac{1}{2(k+s)})^{(k+s)/k}=(\frac{1}{2(k+s)})(\frac{1}{2(k+s)})^{s/k}$ and
$(\frac{1}{2(k+s)})^{s/k}\geq 1/2$ whenever $k$ is sufficiently large, say $k\geq k_1$. Hence the sequence
$\z$ satisfies Carleman's condition (\ref{carleman}).

If $f$ is not a polynomial then $\mu$ is as in (\ref{noncompact-1}) and so
\begin{eqnarray}
\nonumber
L_\z(x_i^{2k})&=&\int x_i^{2k}\frac{\exp\left(-\sum_{i=1}^n\vert x_i\vert\right)}{1+f^2}\,f(\x) \,d\varphi(\x)\\
\nonumber
&\leq&\int x_i^{2k}\frac{\exp\left(-\sum_{i=1}^n\vert x_i\vert\right)}{1+f^2}\,\vert f(\x)\vert \,d\varphi(\x)\\
\label{aux-carl2}
&\leq&\int x_i^{2k}\,\exp\left(-\sum_{i=1}^n\vert x_i\vert\right)\,d\varphi(\x)\,\leq\,(2k)\l M,\end{eqnarray}
where we have used that $\vert f\vert/(1+f^2)\leq 1$ for all $\x$,
and $x_i^{2k}\leq (2k)\l \exp{\vert x_i\vert}$. And so again,
the sequence $\z$ satisfies Carleman's condition (\ref{carleman}).

Next, as $\M_d(\z)\succeq0$ for every $d\in\N$, by Proposition
\ref{prop-berg}, $\z$ is the moment sequence of a measure $\psi$ on $\R^n$ and $\psi$ is determinate. In addition, from the definition
(\ref{defnu}) of $\nu$ and $\z$, we have
\begin{equation}
\label{eq1}
\int_{\R^n} \x^\alpha\,d\psi(\x) \,=\,z_\alpha\,=\,\int_\K \x^\alpha d\nu(\x),\qquad\forall\,\alpha\in\N^n.\end{equation}
But then using $\nu=\nu_1-\nu_2$ in (\ref{decomp})-(\ref{hahn}), (\ref{eq1}) reads
\begin{equation}
\label{eq2}
\int_{\R^n} \x^\alpha\,d(\psi+\nu_2)(\x) \,=\,\int_\K \x^\alpha d\nu_1(\x),\qquad\forall\alpha\in\N^n.\end{equation}
Let $\v=(v_\alpha)$, $\alpha\in\N^n$, 
be the sequence of moments associated with 
$\nu_1$. Of course, $\M_d(\v)\succeq0$ for all $d\in\N$.
Next, 
\[L_\v(x_i^{2k})\,=\,\int_{B_1}x_i^{2k}\,f(\x)\,d\mu(\x)\,\leq\,\int_{\K}x_i^{2k}\,\vert f(\x)\vert\,d\mu(\x),\]
and so, depending on whether $f$ is a polynomial or not, we obtain
$L_\v(x_i^{2k})\leq 2Mn\Vert f\Vert_1\,(2(k+s))\l$ as in (\ref{aux-carl1}) or
$L_\v(x_i^{2k})\leq (2k)\l M$ as in (\ref{aux-carl2}). In both cases the sequence $\v$ satisfies Carleman's condition (\ref{carleman})
and since $\M_d(\v)\succeq0$ for all $d\in\N$, by Proposition \ref{prop-berg},
$\nu_1$ is moment determinate.
But then (\ref{eq2}) yields $\nu_1=\psi+\nu_2$, or equivalently, $\psi=\nu_1-\nu_2\,(=\nu)$, that is,
$\nu$ is a positive measure. 
The rest of the proof is exactly the same as
for proof of Theorem \ref{thmradon}.
\end{proof}

\end{document}